\documentclass[reqno,12pt]{amsart}
\usepackage{a4wide}
\usepackage{amsmath}
\usepackage{amssymb}
\usepackage{amsthm}

\numberwithin{equation}{section}

\newtheorem{theo}{Theorem}

\newtheorem{coro}{Corollary}

\newtheorem{lem}{Lemma}

\theoremstyle{remark}
\newtheorem{Remark}{Remark}
\newtheorem{Remarks}[Remark]{Remarks}

\def\fl#1{\left\lfloor#1\right\rfloor}

\def\v#1{\left\vert#1\right\vert}
\def\R{\mathbb R}
\def\bz{{\mathbf z}}
\def\bv{{\mathbf v}}
\def\bu{{\mathbf u}}
\def\bz{{\mathbf z}}
\def\ba{{\mathbf a}}
\def\bi{{\mathbf i}}
\def\bj{{\mathbf j}}
\def\bk{{\mathbf k}}
\def\bm{{\mathbf m}}
\def\bM{{\mathbf M}}
\def\bn{{\mathbf n}}
\def\bK{{\mathbf K}}
\def\bL{{\mathbf L}}
\def\bN{{\mathbf N}}
\def\bP{{\mathbf P}}

\def\bid{{\mathbf 1}}
\def\bnull{{\mathbf 0}}
\def\Z{\mathbb Z}

\def\al{\alpha}
\def\be{\beta}

\def\ep{\varepsilon}

\def\la{\lambda}

\def\Ga{\Gamma}

\def\({\left(}
\def\){\right)}
\def\[{\left[}
\def\]{\right]}

\def\fl#1{\left\lfloor#1\right\rfloor}

\def\dd{\partial}

\setcounter{tocdepth}{2}

\begin{document}

\title[]{Multivariate $p$-adic formal congruences and 
integrality of Taylor coefficients of mirror maps}

\author[]{C. Krattenthaler$^\dagger$ and T. Rivoal}
\date{\today}

\address{C. Krattenthaler, Fakult\"at f\"ur Mathematik, Universit\"at Wien,
Nordbergstra{\ss}e~15, A-1090 Vienna, Austria.
WWW: \tt http://www.mat.univie.ac.at/\~{}kratt.}

\address{T. Rivoal,
Institut Fourier,
CNRS UMR 5582, Universit{\'e} Grenoble 1,
100 rue des Maths, BP~74,
38402 Saint-Martin d'H{\`e}res cedex,
France.\newline
WWW: \tt http://www-fourier.ujf-grenoble.fr/\~{}rivoal.}

\thanks{$^\dagger$Research partially supported 
by the Austrian Science Foundation FWF, grants Z130-N13 and S9607-N13,
the latter in the framework of the National Research Network
``Analytic Combinatorics and Probabilistic Number Theory''}

\thanks{This paper was written in part during the authors' stay 
at the Erwin Schr\"odinger Institute for Physics and Mathematics,
Vienna, during the programme ``Combinatorics and Statistical Physics''
in Spring 2008.}

\subjclass[2000]{Primary 11S80;
Secondary 11J99 14J32 33C70}

\keywords{Calabi--Yau manifolds, integrality of mirror maps,
$p$-adic analysis, Dwork's theory in several variables,  
harmonic numbers, hypergeometric differential equations}

\begin{abstract}
We generalise Dwork's theory of $p$-adic formal congruences 
from the univariate to a multi-variate setting. We apply our
results to prove integrality assertions on the Taylor coefficients of
(multi-variable) mirror maps. More precisely, 
with $\bz=(z_1,z_2,\dots,z_d)$,
we show that the Taylor coefficients of the multi-variable series
$q(\bz)=z_i\exp(G(\bz)/F(\bz))$ are integers, where 
$F(\bz)$ and 
$G(\bz)+\log(z_i) F(\bz)$, $i=1,2,\dots,d$, 
are specific solutions of certain GKZ systems.
This result implies the integrality
of the Taylor coefficients of numerous families of multi-variable mirror maps 
of Calabi--Yau complete intersections in weighted projective spaces,
as well as of many one-variable mirror maps in the ``{\em Tables of 
Calabi--Yau equations}'' [{\tt ar$\chi$iv:math/0507430}] of Almkvist,
van Enckevort, van Straten and Zudilin.
In particular, our results prove a conjecture of Batyrev and van Straten in
[{\em Comm. Math. Phys.} {\bf 168} (1995), 493--533] on the
integrality of the Taylor coefficients of canonical coordinates for
a large family of such coordinates in several variables.
\end{abstract}

\maketitle

\section{Introduction and statement of the results}

In \cite{dworkihes,dwork1,dwork2,dwork3,dwork}, Dwork developed a
sophisticated theory for proving 
analytic and arithmetic properties of solutions to ($p$-adic)
differential equations. In \cite{dworkihes,dwork}, he focussed on the
case of hypergeometric differential equations. In particular, the
article \cite{dwork} 
contains a ``formal congruence'' criterion that enabled him to
address the analytic continuation of quotients of certain solutions
and to establish arithmetic properties satisfied by exponentials of
such quotients. These exponentials of ratios of solutions to
hypergeometric differential equations (in fact, of Picard--Fuchs
equations) have recently received great attention in mathematical
physics and algebraic geometry under the name of {\it canonical
coordinates}. Their compositional inverses, known as 
{\it mirror maps}, are an important
ingredient in the computation of the Yukawa coupling in the
theory of mirror symmetry. It is conjectured that the coefficients in
the Lambert series expansion of the Yukawa coupling produce Gromov--Witten
invariants of classes of rational curves.

It is only relatively recent, that Dwork's theory has been
systematically applied to obtain general arithmetic results on the
Taylor coefficients of mirror maps. 
Partial results in this direction were found by Lian and Yau
\cite{lianyau,lianyau2}, by Zudilin 
\cite{zud}, and by Kontsevich, Schwarz and Vologodsky \cite{KoSVAA,volog}. 
The (so far) strongest
and most general results are contained in~\cite{dela,kr,kr2},
where, in particular, numerous integrality
results for the Taylor coefficients of univariate mirror maps 
of Calabi--Yau complete intersections in weighted projective spaces
are proven, 
improving and refining the afore-mentioned results by Lian and 
Yau, and by Zudilin. However, all these results do not touch the case
of {\it multi-variable} mirror maps, upon which they are not able to
say anything. The goal of this paper is to set the basis of a
theory which is capable to address questions of integrality of Taylor
coefficients of multi-variable mirror maps,
and to apply this theory
systematically to large classes of
such mirror maps.

\subsection{Multivariate theory of formal congruences}\label{ssec:0}

The proof strategy in \cite{dela,kr,kr2,lianyau,lianyau2,zud} for obtaining
integrality assertions on the Taylor coefficients of one-variable
mirror maps is crucially
based on a series of reductions and results, of which the corner stones
are: 

\begin{enumerate} 
\item[(D1)]
the conversion of the integrality problem to a $p$-adic problem;
\item[(D2)] a lemma due to Dieudonn\'e and Dwork (cf.\ \cite[Ch.~14, p.~76]{lang})
providing a criterion for
deciding whether a power series with coefficients over $\mathbb Q_p$
has coefficients in $\mathbb Z_p$;
\item[(D3)] a reduction lemma for harmonic numbers due to the authors
(cf.\ \cite[Lemma~1, respectively Lemma~5]{kr} and
\cite[Lemma~3]{kr2});
\item[(D4)] a combinatorial lemma due to Dwork \cite[Lemma~4.2]{dwork} 
for rearranging sums that appear in this context in a way tailor-made
for $p$-adic analysis;
\item[(D5)] Dwork's theorem on formal congruences (cf.\ \cite[Theorem
1.1]{dwork}).
\end{enumerate}

We point out that Lian and Yau, and Zudilin do not need item (D3) due
to the nature of the special families of mirror maps that they were
considering. Indeed, item (D3) is the decisive novelty which enabled
the authors to arrive at their general sets of results in
\cite{kr,kr2}. On the side, we remark that Zudilin also condenses (D4)
and (D5) into one step in the proof of his main result in
\cite{zud}. However, in order to arrive at the general results in
\cite{kr,kr2}, it turned out to be necessary to follow the full path
outlined by (D1)--(D5) above, as attempts to lift Zudilin's variation
to this generality failed. 

With the exception of (D1), which trivially extends to the
multi-variable case, for none of the above items there exist
multi-variate extensions in the current literature. In particular, 
no approach for attacking integrality questions for
multi-variable mirror maps has been available so far. 

In this paper, we present multi-variate versions for all of
(D2)--(D5); all of them seem to be new. 
Our multi-variate extension of (D2) is the content of 
Lemma~\ref{lem:dwork2} in Section~\ref{sec:outline}, 
our multi-variate version of (D3) can be found
in Lemma~\ref{lem:333} in Section~\ref{sec:outline}, 
while Lemma~\ref{lem:Dcomb}  
in Section~\ref{sec:comblemma}
provides our multi-variate extension of (D4). On the other hand, 
we state our
multi-variate extension of item~(D5) in Theorem~\ref{theo:1}
below. Since its one-variable
special case enabled Dwork to address the question of analytic
extension of certain ratios of generalised $p$-adic hypergeometric  
series in one variable, we expect our result below to be
the appropriate tool for analogous studies of multivariable $p$-adic
hypergeometric series.

For the statement of our multi-variate theorem on formal congruences,
we need some standard multi-index notation. Namely, given a positive
integer $d$, a real number $\la$, and vectors 
$\mathbf m=(m_1,m_2,\dots,m_d)$ and
$\mathbf n=(n_1,n_2,\dots,n_d)$ in $\R^d$, we write 
$\bm+\bn$ for $(m_1+n_1,m_2+n_2,\dots,m_d+n_d)$,
$\la\bm$ for $(\la m_1,\la m_2,\dots,\la m_d)$,
we write $\bm\ge\bn$ if and only if
$m_i\ge n_i$ for $i=1,2,\dots,d$, and
we write $\bnull$ for $(0,0,\dots,0)\in \Z^d$ and
$\bid$ for $(1,1,\dots,1)\in\Z^d$.

\begin{theo} \label{theo:1}
Let 
$A:\mathbb{Z}_{\ge 0}^d\to \mathbb{Z}_p\setminus\{0\}$
and 
$g:\mathbb{Z}_{\ge 0}^d\to \mathbb{Z}_p\setminus\{0\}$ be maps satisfying the
following three properties:

\begin{enumerate} 
\item[$(i)$] $v_p\big(A(\bnull)\big) =0$;
\item[$(ii)$] $A(\bn)\in g(\bn)\mathbb{Z}_p$;
\item[$(iii)$] for all non-negative integers $s$ and 
all integer vectors $\bv,\bu,\bn\in\Z^d$ with $\bv,\bu,\bn\ge \bnull$ with 
$0\le v_i<p$ and
$0\le u_i<p^s$, $i=1,2,\dots,d$, 
$$
\frac{A(\bv +p\bu+\bn p^{s+1})}{A(\bv +p\bu)}- 
\frac{A(\bu+\bn p^s)}{A(\bu)}
\in p^{s+1} \,\frac{g(\bn)}{g(\bv +p\bu)}\,\mathbb{Z}_p.
$$
\end{enumerate}

Then, for all non-negative integers $s$ and all 
integer vectors $\bm,\bK,\ba\in\Z^d$ with $\bm\ge\bnull$ and $0\le a_i<p$,
$i=1,2,\dots,d$, 
we have
\begin{equation*}
\sum_{p^s\bm\le\bk\le p^s(\bm+\bid)-\bid}^{} 
\big( A(\ba +p\bk)A(\bK-\bk)-A(\ba +p(\bK-\bk))A(\bk)\big)
\in p^{s+1}g(\bm)\mathbb{Z}_p,
\end{equation*}
where we extend $A$ to $\mathbb{Z}^d$ by $A(\bn)=0$ if there is an $i$
such that $n_i<0$.
\end{theo}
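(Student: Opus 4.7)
The plan is to prove the congruence by induction on $s$, adapting Dwork's one-variable argument (\cite[Theorem~1.1]{dwork}) to the multivariate setting.

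For the base case $s=0$, the summation collapses to the single term $\bk=\bm$, reducing the claim to $A(\ba+p\bm)A(\bK-\bm)-A(\ba+p(\bK-\bm))A(\bm)\in p\,g(\bm)\Z_p$. Hypothesis~(iii) with $s=0$ and $\bu=\bnull$ yields, for any $\bn\ge\bnull$, the expansion
\[A(\ba+p\bn)=\frac{A(\ba)A(\bn)}{A(\bnull)}+p\,\frac{A(\ba)\,g(\bn)}{g(\ba)}\,\eta_{\bn},\qquad\eta_\bn\in\Z_p.\]
Substituting this for $\bn=\bm$ and $\bn=\bK-\bm$, the leading products $A(\ba)A(\bm)A(\bK-\bm)/A(\bnull)$ cancel, and the remaining terms lie in $p\,g(\bm)\Z_p$ by (i) (which ensures $1/A(\bnull)\in\Z_p$), (ii) (which ensures $A(\ba)/g(\ba),\,A(\bm)/g(\bm)\in\Z_p$), and the fact that $A(\bK-\bm)\in\Z_p$.

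For the induction step $s\to s+1$, I write each $\bk\in[p^{s+1}\bm,p^{s+1}(\bm+\bid)-\bid]$ uniquely as $\bk=\bv+p\bk'$ with $\bv\in[0,p)^d$ and $\bk'\in[p^s\bm,p^s(\bm+\bid)-\bid]$, and similarly decompose $\bK-\bv=\mathbf{r}+p\mathbf{q}$ with $\mathbf{r}\in[0,p)^d$, so that $\bK-\bk=\mathbf{r}+p(\mathbf{q}-\bk')$ and $\bv\mapsto\mathbf{r}$ is an involution of $[0,p)^d$. Applying~(iii) at $s=1$ twice---once for $A(\ba+p\bv+p^2\bk')$ and once for $A(\ba+p\mathbf{r}+p^2(\mathbf{q}-\bk'))$---the summand of the theorem rewrites, modulo errors of order $p^2g(\bk')$ and $p^2g(\mathbf{q}-\bk')$, as the product of $A(\bk)A(\bK-\bk)$ with the ``Wronskian-type'' bracket $A(\mathbf{r})A(\ba+p\bv)-A(\bv)A(\ba+p\mathbf{r})$. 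A further use of~(iii) at $s=0$ shows this bracket lies in $p\,g(\bv)g(\mathbf{r})\Z_p$. Pairing $\bv$ with $\mathbf{r}$ via the involution and performing the substitution $\bk'\leftrightarrow\mathbf{q}-\bk'$ within each pair organizes the paired contributions as a level-$s$ box-sum of the shape treated by the theorem itself; the inductive hypothesis then supplies divisibility by $p^{s+1}g(\bm)$, and combined with the extra $p$ from the bracket, the principal part of $S_{s+1}$ lies in $p^{s+2}g(\bm)\Z_p$. The error contributions from the initial (iii)-applications are summed against the level-$s$ hypothesis in the same way and remain in $p^{s+2}g(\bm)\Z_p$.

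The principal technical difficulty is the careful bookkeeping when $A(\bv)$ and $A(\mathbf{r})$ appear as denominators in the intermediate approximations, since neither is a unit in $\Z_p$ in general; hypothesis~(ii) is the essential tool, as it allows each such denominator to be absorbed into a matching $g$-factor in the numerator. A secondary subtlety is the precise matching of the paired inner $\bk'$-sum, after the $\bk'\leftrightarrow\mathbf{q}-\bk'$ substitution, with a clean level-$s$ instance $S_s(\ba',\bK',\bm)$ of the theorem: the substitution shifts the canonical box $[p^s\bm,p^s(\bm+\bid)-\bid]$ by an amount depending on $\bv$, so one must invoke the convention $A(\bn)=0$ for $\bn\not\ge\bnull$ to ensure that out-of-range contributions vanish identically and the identification with $S_s$ is exact.
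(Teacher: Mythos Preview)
Your base case $s=0$ is fine and matches the paper's Assertion~A1. The induction step, however, has a genuine gap.

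After your two applications of~(iii), the summand $T(\bk)$ is approximated by
\[
A(\bk)A(\bK-\bk)\left(\frac{A(\ba+p\bv)}{A(\bv)}-\frac{A(\ba+p\br)}{A(\br)}\right)
\]
(note the denominators $A(\bv),A(\br)$, which you suppress). Pairing the $\bv$-block with the $\br$-block and substituting $\bk'\leftrightarrow \bq-\bk'$ does \emph{not} produce a level-$s$ instance of the theorem: the inner sum is over products $A(\bv+p\bk')A(\br+p(\bq-\bk'))$, which is of the shape $A(\ba'+p\bk')A(\cdot+p(\bq-\bk'))$, not $A(\ba'+p\bk')A(\bK'-\bk')$; and after the substitution the $\bk'$-range becomes $[\bq-p^s(\bm+\bid)+\bid,\bq-p^s\bm]$, a box that in general neither equals nor is contained in $[p^s\bm,p^s(\bm+\bid)-\bid]$. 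Invoking $A(\bn)=0$ for $\bn\not\ge\bnull$ does not repair this, since the mismatched indices need not have any negative coordinate. Separately, your error analysis is incomplete: the error terms live in $p^2 g(\bk')\Z_p$ (respectively $p^2 g(\bq-\bk')\Z_p$) with $\bk'\in[p^s\bm,p^s(\bm+\bid)-\bid]$, and nothing in the hypotheses relates $g(\bk')$ to $g(\bm)$ with an extra factor~$p^s$; so the claim that the summed errors lie in $p^{s+2}g(\bm)\Z_p$ is unjustified.

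The paper's proof avoids these problems by a different scheme. Rather than peeling off one $p$-digit from $\bk$, it uses a \emph{translation} identity (Assertion~A2):
\[
U(\bk+p^s\bm,\bK+p^s\bm)\equiv \frac{A(\bk+p^s\bm)}{A(\bk)}\,U(\bk,\bK)\pmod{p^{s+1}g(\bm)},
\]
which immediately relates $H(\bm,\bK+p^s\bm;s)$ to a weighted sum of level-$0$ terms $H(\bk,\bK;0)$. An inner induction (Assertions~$\beta_{t,s}$, driven by A3) pushes this to $H(\bm,\bK+p^s\bm;s)\equiv \frac{A(\bm)}{A(\bnull)}H(\bnull,\bK;s)$. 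The proof is then closed by a global antisymmetry argument you do not use: Lemma~3(ii) gives $\sum_{\bm} H(\bm,\bK;s)=0$, and a minimality argument on $|\bK|$ forces $H(\bnull,\bK;s)\equiv 0\pmod{p^{s+1}}$. It is this last step---using the vanishing of the \emph{total} sum over all boxes---that supplies the missing $p$-power; a purely local box-by-box induction of the kind you sketch does not seem to have access to it.
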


While the proofs of Lemmas~\ref{lem:dwork2} and \ref{lem:Dcomb}
(corresponding to items (D2) and (D4)) are relatively straightforward
extensions of the one-variable proofs given in 
\cite[Ch.~14, p.~76]{lang} and \cite[proof of Lemma~4.2]{dwork},
respectively, the proofs of Lemma~\ref{lem:333} and
Theorem~\ref{theo:1} (corresponding to items (D3) and (D5)) 
need new ideas.
The proof of Lemmas~\ref{lem:dwork2} is given in
Section~\ref{sec:dwork2}. Section~\ref{sec:333} is devoted to the
proof of Lemma~\ref{lem:333}. Even in the one-dimensional case, this
proof is new, as it simplifies the earlier proofs 
\cite[proofs of Lemma~1, respectively Lemma~5]{kr} and
\cite[proof of Lemma~3]{kr2}. In fact, it turned out, that these
earlier proofs could not be extended to the multi-variate case.
The proof of Lemma~\ref{lem:Dcomb} can be found in
Section~\ref{sec:comblemma}. Finally, in
Section~\ref{sec:congruence} we prove Theorem~\ref{theo:1}.

The main application of our multi-variate theory of formal congruences
that we present in this paper concerns the proof that, for a large class of
multi-variable mirror maps, their Taylor coefficients are
integers. We state the corresponding general theorem in the next
subsection. The subsequent subsection collects some particularly
interesting special cases and consequences.

\subsection{A family of GKZ functions and their associated mirror maps}

In order to state the results in this section conveniently, we need to
further enlarge our set of multi-index notations given before
Theorem~\ref{theo:1}. Given vectors 
$\mathbf m=(m_1,m_2,\dots,m_d)$ and
$\mathbf n=(n_1,n_2,\dots,n_d)$ in $\R^d$, we write 
$\bm\cdot\bn$ for the scalar product $m_1n_1+m_2n_2+\dots+m_dn_d$,
and we write $\v{\bm}$ for 
$m_1+m_2+\dots+m_d$. Furthermore, given a vector
$\bz=(z_1,z_2,\dots,z_d)$ of variables and
$\bn=(n_1,n_2,\dots,n_d)\in\Z^d$, we write
$\bz^{\bn}$ for the {\em product} $z_1^{n_1}z_2^{n_2}\cdots z_d^{n_d}$.
On the other hand, if $n$ is an integer, we write
$\bz^n$ for the {\it vector} $(z_1^n,z_2^n,\dots,z_d^n)$.

\medskip

Given $k$ vectors $\bN^{(j)}=(N^{(j)}_1,N^{(j)}_2,\dots,N^{(j)}_d)\in\Z^d$, $j=1, \ldots, k$, 
with $\bN^{(j)}\ge \bnull$, let us define the series 
$$
F_\bN(\bz) = \sum_{\bm\ge\bnull}^{}\bz^\bm
\prod _{j=1} ^{k} \frac{(\bN^{(j)}\cdot \bm)!}
{\prod _{i=1} ^{d}m_i!^{N_i^{(j)}}}
=\sum_{\bm\ge\bnull}^{}\bz^\bm
\prod _{j=1} ^{k} \frac{\big(\sum _{i=1} ^{d}N^{(j)}_im_i\big)!}
{\prod _{i=1} ^{d}m_i!^{N_i^{(j)}}}.
$$ 
Since the Taylor coefficients of $F_\bN(\bz)$ are products of 
multinomial coefficients, it follows that $F_\bN(\bz) \in
1+\sum_{i=1}^d z_i\mathbb{Z}[[\bz]]$, 
where $\mathbb{Z}[[\bz]]$ denotes the set of all (formal) power series 
in the variables $z_1,z_2,\dots,z_d$ with integer coefficients.

This series is  a 
GKZ hypergeometric function~(\footnote{See~\cite{stienstra} for 
an introduction to these functions, which are 
a far-reaching generalisation 
of the classical hypergeometric functions to several variables.}) 
and it is known to ``come from geometry,''
i.e., it can be viewed as the period of certain 
multi-parameter families of algebraic varieties in a product 
of weighted projective spaces (see~\cite{hosono} for details). 
It satisfies a linear differential system
$\{\mathcal{L}_{i,\bN}(F_\bN)=0: i=1, \ldots, d\}$ defined by 
the operators
$$
\mathcal{L}_{i,\bN}=\theta_i^{N_i^{(1)}+\cdots + N_i^{(k)}} - 
z_i\prod_{j=1}^k\prod_{r_j=1}^{N_i^{(j)}}\Big(\sum_{\ell=1}^d N_\ell^{(j)}\theta_\ell + r_j\Big), 
\quad i=1, \ldots, d,
$$
where $\theta_i=z_i\frac{\dd}{\dd z_i}$. 
Amongst the other solutions of this system, we find  the $d$ functions 
$\log(z_i)F_\bN(\bz)+ G_{i,\bN}(\bz)$, $i=1, \ldots, d$, defined by  
$$
 G_{i,\bN}(\bz) =  \sum_{\bm\ge\bnull}^{}\bz^\bm
\bigg( 
\sum_{j=1}^k N_i^{(j)}H_{\bN^{(j)}\cdot \bm} - H_{m_i} \sum_{j=1}^k N_i^{(j)}
\bigg)
\prod _{j=1} ^{k} \frac{(\bN^{(j)}\cdot \bm)!}
{\prod _{i=1} ^{d}m_i!^{N_i^{(j)}}}.
$$ 
Here and in the rest of the article, 
$H_m = \sum_{j=1}^m 1/j$
denotes the $m$-th harmonic number, with the convention $H_0=0$.

This set of solutions enables us to define $d$
{\it canonical coordinates} $q_{i,\bf N}(\bz)$ by 
$$
q_{i,\bf N}(\bz) = z_i\exp\big(G_{i,\bN}(\bz)/F_\bN(\bz)\big),
$$
which are objects with many fundamental properties for the ``mirror symmetry'' study of 
the underlying 
multi-parameter families of varieties. 
The compositional inverse of the map
$$
\mathbf z\mapsto 
(q_{1,\mathbf N}(\mathbf z),q_{2,\mathbf
  N}(\mathbf z), \dots,q_{d,\mathbf N}(\mathbf z))
$$
defines the vector $
(z_{1,\mathbf N}(\mathbf q),z_{2,\mathbf
  N}(\mathbf q), \dots,z_{d,\mathbf N}(\mathbf q))$ of {\it mirror
  maps}. In this paper, 
by abuse of terminology, we will 
also use the term ``mirror map'' for any  
canonical coordinate.~(\footnote{Canonical coordinates and mirror
maps have distinct  
geometric meanings. However, 
in the number-theoretic study undertaken in the present paper,
they play strictly the same role, because 
$q_{i,\mathbf N}(\mathbf z)\in
z_i\mathbb{Z}[[\mathbf z]]$, $i=1,2,\dots,d$, implies that 
$z_{i,\mathbf N}(\mathbf q)\in 
q_i\mathbb{Z}[[\mathbf q]]$, $i=1,2,\dots,d$, and
conversely.})  

Let us define the series  
$$
G_{\bL,\bN}(\bz) = \sum_{\bm\ge\bnull}^{}\bz^\bm
H_{\bL\cdot \bm}
\prod _{j=1} ^{k} \frac{\left(
\bN^{(j)}\cdot\bm\right)!}{
\prod _{i=1} ^{d}m_i!^{N_i^{(j)}}} \in \sum_{i=1}^d z_i\mathbb{Q}[[\bz]],
$$
where $\bL\in\mathbb{Z}^d$ is $\ge \bnull$.
For any $i=1, \ldots, d$, the function $G_{i,\bN}(\bz)$ is a 
finite linear combination with integer coefficients in the functions 
$G_{\bL,\bN}(\bz)$, where the summation runs over various vectors~$\bf L$, 
each one with the property that  
$\bnull \le \bL \le \bN^{(j(\bL))}$ for some $j(\bL)\in \{1, \ldots, k\}$.  
Therefore, the following theorem concerns as well
our mirror maps $q_{i,\bf N}(\bz)\in z_i\mathbb{Z}[[\bz]]$ 
(see Corollary~\ref{cor:mirror}).

\begin{theo} \label{conj:1}
Let $d$ and $k$ be positive integers.
For all vectors $\bL=(L_1,L_2,\dots,L_d)\in\Z^d$ and
$\bN^{(j)}=(N^{(j)}_1,N^{(j)}_2,\dots,N^{(j)}_d)\in\Z^d$, $j=1,2,\dots,k$,
with $\bnull \le \bL\le \bN^{(1)}$, $\bN^{(2)}\ge \bnull, \ldots, \bN^{(k)}\ge \bnull$, we have
$$
q_{\bL,\bN}(\bz):=\exp\big(G_{\bL,\bN}(\bz)/F_\bN(\bz)\big) \in 
\mathbb{Z}[[\bz]].
$$
\end{theo}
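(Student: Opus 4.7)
The plan is to verify, for every prime $p$, that $q_{\bL,\bN}(\bz)\in\mathbb{Z}_p[[\bz]]$, and to do so via the multivariate Dieudonn\'e--Dwork criterion (Lemma~\ref{lem:dwork2}) combined with the multivariate formal-congruence theorem (Theorem~\ref{theo:1}), mediated by the harmonic-number Lemma~\ref{lem:333}.

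\textbf{Step 1 (Dieudonn\'e--Dwork reduction).} Since $F_\bN(\bz)\in 1+\sum_i z_i\mathbb{Z}[[\bz]]$ is a $p$-adic unit in $\mathbb{Z}_p[[\bz]]$ and $G_{\bL,\bN}(\bz)$ has no constant term, $H(\bz):=G_{\bL,\bN}(\bz)/F_\bN(\bz)\in \bz\mathbb{Q}_p[[\bz]]$. By the multivariate Dieudonn\'e--Dwork lemma applied to $\exp(H(\bz))$, we reduce integrality of $q_{\bL,\bN}(\bz)$ to the congruence
$$
F_\bN(\bz)G_{\bL,\bN}(\bz^p)-p F_\bN(\bz^p)G_{\bL,\bN}(\bz)\in p F_\bN(\bz)F_\bN(\bz^p)\mathbb{Z}_p[[\bz]].
$$

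\textbf{Step 2 (Coefficient extraction).} Set $A(\bm):=\prod_{j=1}^k (\bN^{(j)}\cdot\bm)!/\prod_{i=1}^d m_i!^{N_i^{(j)}}$, so that $F_\bN(\bz)=\sum_\bm A(\bm)\bz^\bm$ and $G_{\bL,\bN}(\bz)=\sum_\bm H_{\bL\cdot\bm}A(\bm)\bz^\bm$. Write every index $\bK$ uniquely as $\ba+p\bk$ with $0\le a_i<p$. Extracting the coefficient of $\bz^{\ba+p\bk}$ on the left-hand side yields
$$
\Delta_{\ba,\bk}:=\sum_{\bnull\le\bi\le\bk} A(\ba+p\bi)\,A(\bk-\bi)\bigl[H_{\bL\cdot(\bk-\bi)}-pH_{\bL\cdot(\ba+p\bi)}\bigr].
$$

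\textbf{Step 3 (Harmonic-number reduction and antisymmetrisation).} Invoke the multivariate harmonic-number Lemma~\ref{lem:333}: schematically $p H_{pM+r}\equiv H_{M+\fl{r/p}}$ modulo terms absorbed by the denominators coming from $F_\bN(\bz)F_\bN(\bz^p)$. With $M=\bL\cdot\bi$ and $r=\bL\cdot\ba$, this replaces $pH_{\bL\cdot(\ba+p\bi)}$ by $H_{\bL\cdot\bi+\fl{\bL\cdot\ba/p}}$. Then perform the substitution $\bi\mapsto\bk-\bi$ in one of the two resulting sums to collapse $\Delta_{\ba,\bk}$, modulo $p F_\bN(\bz)F_\bN(\bz^p)\mathbb{Z}_p[[\bz]]$, into an antisymmetric combination of the form
$$
\sum_{\bnull\le\bi\le\bk}\bigl[A(\ba+p\bi)A(\bk-\bi)-A(\ba+p(\bk-\bi))A(\bi)\bigr]\cdot w_{\bi,\bk,\ba},
$$
where each weight $w_{\bi,\bk,\ba}$ is a harmonic-number value whose denominator is controlled by an auxiliary multinomial divisor.

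\textbf{Step 4 (Applying Theorem~\ref{theo:1}).} Apply Theorem~\ref{theo:1} with $A$ as above and $g(\bi)$ chosen to be precisely the multinomial divisor that controls $w_{\bi,\bk,\ba}$. Hypotheses~$(i)$ and~$(ii)$ are immediate ($A(\bnull)=1$ and $g(\bi)$ divides $A(\bi)$ by direct inspection of factorial valuations). Hypothesis~$(iii)$ reduces to standard Dwork-style valuation estimates on products of factorials, proved by combining Lucas-like congruences with the multivariate combinatorial rearrangement of Lemma~\ref{lem:Dcomb}. Taking $s=0$, the conclusion of the theorem exactly states that the antisymmetric differences in Step~3 lie in $p\,g(\bi)\,\mathbb{Z}_p$; multiplied by $w_{\bi,\bk,\ba}$ and summed, this yields $\Delta_{\ba,\bk}\in p\cdot[\bz^{\ba+p\bk}](F_\bN(\bz)F_\bN(\bz^p))\,\mathbb{Z}_p$, closing the Dieudonn\'e--Dwork verification.

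\textbf{Main obstacle.} The delicate point is Step~3: the terms $H_{\bL\cdot(\bk-\bi)}$ and $pH_{\bL\cdot(\ba+p\bi)}$ are not symmetric under $\bi\leftrightarrow\bk-\bi$, so recasting $\Delta_{\ba,\bk}$ in the antisymmetric shape required by Theorem~\ref{theo:1} demands careful handling of the multi-dimensional shift by $\fl{\bL\cdot\ba/p}$ across all coordinates simultaneously. This coupled behaviour is precisely why the one-variable harmonic-reduction arguments of \cite{kr,kr2} do not generalise, and why the new multi-variable Lemma~\ref{lem:333} (with its conceptually different proof) is indispensable for this step.
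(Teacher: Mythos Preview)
Your outline tracks the paper through Step~3, but Step~4 contains a genuine gap that would make the argument fail.

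You propose to apply Theorem~\ref{theo:1} only with $s=0$, choosing $g$ to be ``the multinomial divisor that controls $w_{\bi,\bk,\ba}$.'' This cannot work. After the antisymmetrisation of Step~3 the weights are essentially $H_{\bL\cdot\bi}$, and $v_p(H_{\bL\cdot\bi})$ can be as negative as $-\lfloor\log_p(\bL\cdot\bi)\rfloor$, unbounded as $\bi$ grows. A single application of Theorem~\ref{theo:1} at $s=0$ yields only that each antisymmetric bracket lies in $p\,g(\bi)\mathbb{Z}_p$; for the product with $H_{\bL\cdot\bi}$ to land in $p\mathbb{Z}_p$ you would need $v_p(g(\bi))\ge -v_p(H_{\bL\cdot\bi})$, i.e.\ $g(\bi)$ must carry arbitrarily high powers of $p$. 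But hypothesis~$(ii)$ forces $A(\bi)\in g(\bi)\mathbb{Z}_p$, and $A(\bi)=B_\bN(\bi)$ need not have large $p$-adic valuation (e.g.\ for $d=k=1$, $\bN^{(1)}=(2)$, one has $v_p(B_\bN(p))=v_p\binom{2p}{p}=0$ while $v_p(H_p)=-1$). No choice of $g$ can satisfy both constraints simultaneously, so the $s=0$ conclusion is too weak.

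What the paper actually does at this point is use Lemma~\ref{lem:Dcomb} \emph{here}, not inside the verification of hypothesis~$(iii)$ as you suggest. The combinatorial lemma telescopes the sum over $\bi$ into a double sum over scales $s=0,\dots,r-1$ and blocks $\bm$, replacing the raw weight $H_{\bL\cdot\bi}$ by differences $H_{\bL\cdot\bm\,p^s}-H_{\bL\cdot\lfloor\bm/p\rfloor\,p^{s+1}}$. Lemma~\ref{lem:11} shows these differences, when multiplied by $B_\bN(\bm)$, lie in $p^{-s}\mathbb{Z}_p$; Theorem~\ref{theo:1} applied with the \emph{same} $s$ and $A=g=B_\bN$ shows the corresponding block sum lies in $p^{s+1}B_\bN(\bm)\mathbb{Z}_p$. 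The two estimates combine to give one net factor of $p$ at every scale, which is what closes the argument. Your proposal omits this scale-by-scale decomposition entirely and therefore cannot absorb the denominators coming from harmonic numbers. Separately, hypothesis~$(iii)$ for $A=g=B_\bN$ is verified in the paper via the $p$-adic gamma function (Lemmas~\ref{lem:1}--\ref{lem:33}), not via Lemma~\ref{lem:Dcomb}.
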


\begin{Remarks}\label{rem:1}
(a) Given the fact that the canonical coordinates $q_{i,\mathbf
N}(\mathbf z)$ (which, in their turn, define the mirror maps 
$z_{i,\bN}(\mathbf q)$) can be expressed as products of several series of the
form $q_{\bL,\bN}(\bz)$ (with varying $\bL$), 
we call $q_{\bL,\bN}(\bz)$ a {\em mirror-type map}.

(b) By carefully going through our arguments, one sees that minor
modifications lead to the slightly stronger statement that,
under the assumptions of Theorem~\ref{conj:1}, we have
$$
\exp\big(G_{\bL,\bN}(\bz)/F_\bN(\bz)\big) \in 
\prod _{j=2} ^{k}\big(\min\{N_1^{(j)},N_2^{(j)},\dots,
N_d^{(j)}\}\big)!\,\mathbb{Z}[[\bz]].
$$
\end{Remarks}

The statement of the theorem 
might suggest that $\bN^{(1)}$ plays a special role amongst the vectors 
$\bN^{(1)}, \bN^{(2)}, \ldots, 
\bN^{(k)}$. Of course, this is not the case: by symmetry, given any 
$j\in\{1, \ldots, k\}$, a similar result 
holds for any $\bL$ such that $\bnull \le \bL \le \bN^{(j)}$. This 
remark implies the following result for the mirror maps
$q_{i,\bf N}(\bz)\in z_i\mathbb{Z}[[\bz]]$, 
proving a conjecture of Batyrev and van Straten
\cite[Conjecture~7.3.4]{batstrat} for
a large family of canonical coordinates in several variables.

\begin{coro} \label{cor:mirror}
Let $d,k$ be positive integers.
For all vectors 
$\bN^{(j)}=(N^{(j)}_1,N^{(j)}_2,\dots,N^{(j)}_d)\in\Z^d$, $j=1,2,\dots,k$,
with $\bN^{(1)}\ge\bnull$, $\bN^{(2)}\ge \bnull, \ldots, \bN^{(k)}\ge
\bnull$, we have 
$
q_{i,\bN}(\bz)\in
\mathbb{Z}[[\bz]]$, $i=1,2,\dots,d$.
\end{coro}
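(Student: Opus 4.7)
The plan is to use the additive structure of $G_{i,\bN}(\bz)$ to reduce the integrality of $q_{i,\bN}(\bz)$ to finitely many applications of Theorem~\ref{conj:1}. Writing $\mathbf{e}_i\in\Z^d$ for the $i$-th standard basis vector, and observing that $H_{m_i}=H_{\mathbf{e}_i\cdot\bm}$, the definition of $G_{i,\bN}(\bz)$ decomposes tautologically as
\[
G_{i,\bN}(\bz)=\sum_{j=1}^{k}N_i^{(j)}\,G_{\bN^{(j)},\bN}(\bz)\;-\;\Bigl(\sum_{j=1}^{k}N_i^{(j)}\Bigr)G_{\mathbf{e}_i,\bN}(\bz).
\]
Dividing through by $F_\bN(\bz)$ and exponentiating turns the canonical coordinate into the product
\[
q_{i,\bN}(\bz)=z_i\prod_{j=1}^{k}q_{\bN^{(j)},\bN}(\bz)^{N_i^{(j)}}\cdot q_{\mathbf{e}_i,\bN}(\bz)^{-\sum_{j=1}^{k}N_i^{(j)}}.
\]

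Next I would invoke Theorem~\ref{conj:1} together with the symmetry remark stated just before the corollary, which permits any of the vectors $\bN^{(1)},\dots,\bN^{(k)}$ to play the role of the distinguished one in the hypothesis $\bnull\le\bL\le\bN^{(1)}$. For each $j$, the choice $\bL=\bN^{(j)}$ satisfies $\bnull\le\bL\le\bN^{(j)}$, so $q_{\bN^{(j)},\bN}(\bz)\in\mathbb{Z}[[\bz]]$. Provided $N_i^{(j_0)}\ge 1$ for at least one index $j_0$ (the only situation in which the factor is non-trivial), the choice $\bL=\mathbf{e}_i$ is legal with $\bN^{(j_0)}$ distinguished, giving $q_{\mathbf{e}_i,\bN}(\bz)\in\mathbb{Z}[[\bz]]$; when instead $N_i^{(j)}=0$ for all $j$, the exponent $-\sum_j N_i^{(j)}$ is $0$ and the corresponding factor is simply~$1$.

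It then remains to check that the negative power of $q_{\mathbf{e}_i,\bN}(\bz)$ does not destroy integrality. Since $G_{\mathbf{e}_i,\bN}(\bz)$ has vanishing constant term (because $H_0=0$), the series $q_{\mathbf{e}_i,\bN}(\bz)$ belongs to $1+(\bz)\mathbb{Z}[[\bz]]$ and is therefore a unit of $\mathbb{Z}[[\bz]]$, whose inverse is again in $\mathbb{Z}[[\bz]]$. Multiplying the positive integer powers of the $q_{\bN^{(j)},\bN}(\bz)$ together with this invertible factor and the prefactor $z_i$ places $q_{i,\bN}(\bz)$ in $z_i\mathbb{Z}[[\bz]]\subset\mathbb{Z}[[\bz]]$, which is the claim.

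I do not anticipate a genuine obstacle here: the conceptual content of the corollary is entirely the linear decomposition of $G_{i,\bN}(\bz)$ into the building blocks $G_{\bL,\bN}(\bz)$ together with the symmetry of Theorem~\ref{conj:1} in the vectors $\bN^{(1)},\dots,\bN^{(k)}$. All the serious work has been pushed into the proof of Theorem~\ref{conj:1}; if anything deserves extra care, it is only the verification that the exponent of $q_{\mathbf{e}_i,\bN}(\bz)$ comes out \emph{exactly} to $-\sum_j N_i^{(j)}$, and that $\mathbf{e}_i$ is indeed dominated by some $\bN^{(j_0)}$ whenever that exponent is non-zero.
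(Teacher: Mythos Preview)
Your argument is correct and follows exactly the route the paper indicates: the text just before Theorem~\ref{conj:1} already notes that each $G_{i,\bN}$ is an integer linear combination of the $G_{\bL,\bN}$ with $\bnull\le\bL\le\bN^{(j(\bL))}$, and the paragraph preceding the corollary invokes the symmetry in the $\bN^{(j)}$'s to conclude. You have simply made that decomposition explicit (as $\sum_j N_i^{(j)}G_{\bN^{(j)},\bN}-(\sum_j N_i^{(j)})G_{\mathbf{e}_i,\bN}$) and added the observation, left implicit in the paper, that the unit property $q_{\bL,\bN}\in 1+\sum_i z_i\mathbb{Z}[[\bz]]$ handles the negative exponent.
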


We outline the proof of Theorem~\ref{conj:1} in
Section~\ref{sec:outline}, thereby showing how the various pieces of
our multi-variate theory of formal congruences fit together in order
to prove integrality assertions for multi-variable mirror(-type)
maps. The details are deferred to
Sections~\ref{sec:dwork2}--\ref{sec:lem11}. 

\subsection{Consequences of Theorem~\ref{conj:1}} \label{ssec:specialisation}

In order to illustrate the range of applicability of
Theorem~\ref{conj:1}, we collect in this subsection some examples and
applications that are of particular interest to
multi-variable {\it and\/} one-variable mirror-type maps.

\medskip
(1) A classical multi-variate example, 
studied in detail in~\cite[Sec.~7]{batstrat} and~\cite[Sec.~8.4]{stienstra}, is 
the case of the two parameters ($w$ and $z$ say) 
family of 
hypersurfaces $V$ of degree 
$(3,3)$ in $\mathbb{P}^2(\mathbb{C})\times \mathbb{P}^2(\mathbb{C})$, which is a family of 
Calabi--Yau threefolds. The periods of the associated mirror family of 
Calabi--Yau hypersurfaces can be expressed in term of the double series 
\begin{equation}\label{eq:Fexampleclassique}
F(w,z) = \sum_{m\ge 0}\sum_{n\ge 0} \frac{(3m+3n)!}{m!^3n!^3} \,w^mz^n,
\end{equation}
which is symmetric and holomorphic in $\{(w,z)\in \mathbb{C}^2 : 
\vert w\vert^{1/3}+ \vert z\vert^{1/3}<\frac {1} {3}\}$. 
It is a solution of the linear differential system 
$\{\mathcal{L}_1(F)=0, \mathcal{L}_2(F)=0\}$ defined by the operators
\begin{equation*}
\begin{cases}
\mathcal{L}_1 = \theta_1^3-w(3\theta_1+3\theta_2+1)(3\theta_1+3\theta_2+2)(3\theta_1+3\theta_2+3)
,
\\
\mathcal{L}_2 = \theta_2^3-z(3\theta_1+3\theta_2+1)(3\theta_1+3\theta_2+2)(3\theta_1+3\theta_2+3)
,
\end{cases}
\end{equation*}
where $\theta_1=w\frac{\dd}{\dd w}$ and $\theta_2=z\frac{\dd}{\dd z}$. 

Two solutions of this system are of the form 
$\log(w)F(w,z) + G_1(w,z)$ and $\log(z)F(w,z) + G_2(w,z)$ where $G_1(w,z)$ and $G_2(w,z)$ are holomorphic 
in $\{(w,z)\in \mathbb{C}^2 : 
\vert w\vert^{1/3}+ \vert z\vert^{1/3}<\frac {1} {3}\}$, and 
are given explicitly by
\begin{align*}
G_1(w,z)  &= \sum_{m\ge 0}\sum_{n\ge 0} \big(3H_{3m+3n} - 3H_m \big)\frac{(3m+3n)!}{m!^3n!^3} \,w^mz^n, 
\\
G_2(w,z)  &= \sum_{m\ge 0}\sum_{n\ge 0} \big(3H_{3m+3n} - 3H_n \big)\frac{(3m+3n)!}{m!^3n!^3} \,w^mz^n.
\end{align*}

Let us now define the 
two variable mirror maps $q_1(w,z)=w\exp\big(G_1(w,z)/F(w,z)\big)$ and 
$q_2(w,z)=z\exp\big(G_2(w,z)/F(w,z)\big)$. Here, $q_1(w,z)=q_2(z,w)$, 
but this is not 
the case in general. It was observed in the early 
developments of mirror symmetry theory that  
$q_1(w,z)$ and $q_2(w,z)$ seem to have integral Taylor
coefficients (see the end of Section~7.1 in~\cite{batstrat} for
example).
Corollary~\ref{cor:mirror} with $d=2$, $k=1$, $\mathbf N^{(1)}=(3,3)$
now provides a proof for this observation. 

\medskip
(2) 
Interesting consequences result also by considering the series expansion
$q_{\bL,\bN}(\bz)$ for cases where some or all of 
the variables $z_i$ are equal to each other. The obtained series
is obviously still a formal power series. Furthermore, since the 
initial power series has integer coefficients, any such
specialisation leads again to a series with integer coefficients.
In this way, we can construct many new mirror-type maps, 
and, for several of them, 
this leads to proofs of conjectures in the literature on the
integrality of their Taylor coefficients.

Here, we provide details for a corresponding example derived from the
mirror-type map of item (1). Subsequently, Item~(3) will address
another family of one-variable examples derived from
two-variable series, which, for example, includes the series whose
coefficients form the famous sequences that appear in Ap\'ery's proof of
the irrationality of $\zeta(2)$ and $\zeta(3)$. 
Finally, in Item~(4), we mention briefly  
certain cases studied in~\cite{almkvist, enck, batstrat}.

We  
put $w=z$ in the example~\eqref{eq:Fexampleclassique} 
considered in Item~(1) above and get
$$
f(z)= \sum_{m\ge 0}\sum_{n\ge 0} z^{m+n} \frac{(3m+3n)!}{m!^3n!^3} = \sum_{k=0}^{\infty} 
z^k \frac{(3k)!}{k!^3}\sum_{j=0}^k \binom{k}{j}^3
$$
after rearrangement. This 
map is studied  
in~\cite[Sec.~7.3]{batstrat}, where it is shown to 
be of significance in the theory of mirror symmetry. 
The function $f$ satisfies a Fuchsian differential equation of order $4$ with maximal unipotent 
monodromy at the origin: it is annihilated by 
the minimal operator
$$
\theta^4-3z(7\theta^2+7\theta+2)(3\theta+1)(3\theta+2)-72z^2(3\theta+5)(3\theta+4)(3\theta+2)(3\theta+1).
$$
Another solution 
is $g(z)+\log(z)f(z)$,
where $g(z)$ is given by 
$$
g(z) = \sum_{k=0}^{\infty} 
z^k \frac{(3k)!}{k!^3}\sum_{j=0}^k \binom{k}{j}^3 (3H_{3k}-3H_{k-j}).
$$ 
The function $g(z)$ is a linear combination with integer coefficients 
of the functions
$$
g_{\bL}(z) = \sum_{k=0}^{\infty} 
z^k \frac{(3k)!}{k!^3}\sum_{j=0}^k \binom{k}{j}^3 H_{L_1j+L_2(k-j)},
$$
where $\bL=(L_1, L_2)\in\mathbb{Z}^2$ is such that $0\le L_1, L_2\le 3$. For these $\bL$, 
equating the variables in Theorem~\ref{conj:1} leads to
$$
\exp\big(g_{\bL}(z)/f(z) \big) \in \mathbb{Z}[[z]],
$$
which, in particular, implies the new result that $z\exp\big(g(z)/f(z)\big)\in z\mathbb{Z}[[z]]$.

\medskip

(3) For any integers $\alpha, \beta$ such that $0 \le \beta\le \alpha$, 
we consider the function
\begin{equation}\label{eq:functapery}
\sum_{m\ge 0}\sum_{n\ge 0} \left(\frac{(m+n)!}{m!\,n!}\right)^{\alpha-\beta}
\left(\frac{(2m+n)!}{m!^2n!}\right)^\beta w^mz^n.
\end{equation}
The 
specialisation $w=z$
produces the function 
$$
\mathcal{A}_{\alpha, \beta}(z) = \sum_{k=0}^{\infty} \bigg( \sum_{j=0}^k \binom{k}{j}^\alpha
\binom{k+j}{j}^\beta\bigg) z^k,
$$
to which we associate the function 
$\mathcal{B}_{\al,\beta}(z)+\log(z)\mathcal{A}_{\alpha, \beta}(z)$ defined by
$$
\mathcal{B}_{\al,\beta}(z) = \sum_{k=0}^{\infty} \bigg( \sum_{j=0}^k \binom{k}{j}^\alpha
\binom{k+j}{j}^\beta\big((\al-\beta) H_k -\al H_{k-j}+\beta H_{k+j}\big)\bigg) z^k.
$$

Let $\mathcal{L}_{\al,\be}$ denote the minimal Fuchsian differential
operator that annihilates
$\mathcal{A}_{\alpha, \beta}(z)$: it does not
always have maximal unipotent monodromy at $z=0$, as 
the case $(\al,\be)=(6,0)$ shows (cf.\ 
\cite[Sec.~10]{almkvist}).
The operator $\mathcal{L}_{\al,\be}$ also annihilates
$\mathcal{B}_{\al,\beta}(z)+\log(z)\mathcal{A}_{\alpha, \beta}(z)$
and we define the mirror map
$z\exp\big(\mathcal{B}_{\al,\beta}(z)/\mathcal{A}_{\al,\beta}(z)\big)$. We 
observe that $\mathcal{B}_{\al,\beta}(z)$ is a linear combination with integer 
coefficients in the functions 
$$
\mathcal{B}_{\bL, \al,\beta}(z) =  \sum_{k=0}^{\infty} \bigg( \sum_{j=0}^k \binom{k}{j}^\alpha
\binom{k+j}{j}^\beta  H_{L_1j+L_2(k-j)}\bigg) z^k.
$$
Here, $\bL=(L_1,L_2)\in \mathbb{Z}^2$ is such that $0\le L_1\le 2$ and $0\le L_2\le 1$. For these $\bL$, 
equating the variables in Theorem~\ref{conj:1} leads to
$$
\exp\big(\mathcal{B}_{\bL, \al,\beta}(z)/\mathcal{A}_{\al,\beta}(z)\big) \in
\mathbb{Z}[[z]],
$$
and this implies that  $z\exp\big(\mathcal{B}_{\al,\beta}(z)/\mathcal{A}_{\al,\beta}(z)\big) 
\in z\mathbb{Z}[[z]]$. This example is particularly 
interesting because it proves that maximal unipotent monodromy at the 
origin is not a necessary condition to obtain mirror-type maps
with integer Taylor coefficients.

It is interesting to note that
the Taylor coefficients of $\mathcal{A}_{2,1}(z)$ and
$\mathcal{A}_{2,2}(z)$ form the  
sequences  appearing in  Ap\'ery's proof of the irrationality of 
$\zeta(2)$ and $\zeta(3)$, respectively. Beukers~\cite{beukers} showed
that $\mathcal{A}_{2,1}(z)$ and $\mathcal{A}_{2,2}(z)$ are strongly 
related to modular forms, a fact which also explains 
the integrality properties of the associated mirror-type
maps. (For $p$-adic properties of $\mathcal{A}_{2,1}(z)$, we
refer the reader to \cite{beukers2}.)

\medskip

(4) 
Equating variables in Theorem~\ref{conj:1} can explain the integrality properties of many of 
the mirror-type maps in~\cite{almkvist}, many of which have been incorporated 
in the table~\cite{enck} of ``Calabi--Yau differential equations''.
This table contains a list of more than 300 
Fuchsian differential equations of order $4$ with certain analytic properties, amongst which 
are maximal unipotent monodromy at the origin and conjectural integrality of the instanton-type numbers. Only the first 29 items 
are currently known to have a geometric origin, 
meaning that they 
have an interpretation in mirror symmetry; for example, the 
instanton-type numbers 
in these cases are really 
instanton numbers.
In particular, the table 
contains the 
mirror-type maps of geometric origin considered in 
Sections~8.1, 8.2, 8.3 and 8.4 
of~\cite{batstrat}, which all come from 
equating variables in series covered by Theorem~\ref{conj:1}. 

Although this is not mentioned 
explicitly in~\cite{enck}, it is plausible that the mirror-type maps associated to each example 
of the table have integer Taylor coefficients.
In this direction, we have checked that the functions whose Taylor coefficients are given in 
items $15$ to $23$, 25, 34, 39, 45, 58, 60, 72, 76, 78, 79, 81, 91, 93, 96, 97, 127, 
130, 188, 190 and $191$, are 
specialisations of multi-variable series that can be treated with 
Theorem~\ref{conj:1}. Hence the mirror-type maps associated to these items have integer Taylor 
coefficients. Incidentally, items $1$ to $14$ are all covered by  
the results in~\cite{kr, lianyau, zud}
and therefore, amongst the ``geometric'' items $1$ to $29$, there remains to 
understand only items $24, 26, 27, 28, 29$.

\medskip

We could use many other 
ways of specialisation in 
conjunction with Theorem~\ref{conj:1}, for example 
``weighted'' equating such as 
$z_1=Mz_2^N$ for some integer parameters $M\neq 0$ and $N\ge 1$. 

\section{Outline of the proof of Theorem~\ref{conj:1}}\label{sec:outline}

In this section, we present a decomposition of the proof of 
Theorem~\ref{conj:1} 
into various assertions, which form our multi-variate theory of formal
congruences described in Subsection~\ref{ssec:0}. The individual
assertions will be proved in the later sections. 

The starting point (listed as (D1) in Subsection~\ref{ssec:0})
is the observation that, given a power series $S(\mathbf
z)=S(z_1,z_2,\dots,z_d)$ in $\mathbb Q[[\mathbf z]]$, 
the series $S(\mathbf z)$ is an element of $\mathbb Z[[\mathbf z]]$ if
and only if, for all primes $p$, it is an element of
$\mathbb Z_p[[\mathbf z]]$.

Next, we want to get rid of the exponential 
function in  the definition of the mirror-type map $q_{\bL,\bN}(\bz)$. 
To achieve this, we use a generalisation 
of a lemma attributed to Dieudonn\'e and Dwork in \cite[Ch.~14, p.~76]{lang}
to several variables,
the latter being the univariate case of the following lemma
(corresponding to (D2) in Subsection~\ref{ssec:0}). 

\begin{lem}
\label{lem:dwork2}
For $S(\bz)\in 1+
\sum _{i=1} ^{d}z_i\mathbb{Q}_p[[\bz]]$, we have
$$
S(\bz)\in 1+\sum _{i=1} ^{d}z_i\mathbb{Z}_p[[\bz]]
\quad \text {if and only if}\quad 
\frac{S(\bz^p)}{S(\bz)^p} \in 1+ p\sum _{i=1} ^{d}z_i\mathbb{Z}_p[[\bz]].
$$
\end{lem}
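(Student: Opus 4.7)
The plan is to deduce both directions of the equivalence from the multi-variable Frobenius congruence
\[
f(\bz)^p \equiv f(\bz^p) \pmod{p\,\mathbb{Z}_p[[\bz]]},
\]
valid for every $f(\bz)\in\mathbb{Z}_p[[\bz]]$. This follows by expanding $f(\bz)^p$ with the multinomial theorem, observing that every mixed multinomial coefficient $\binom{p}{k_1,\ldots,k_r}$ lies in $p\mathbb{Z}$ unless some $k_i=p$, and then applying Fermat's little theorem ($a^p\equiv a\pmod p$ for $a\in\mathbb{Z}_p$) to the surviving diagonal terms.

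For the forward direction, applying the Frobenius congruence to $S$ itself gives $S(\bz)^p\equiv S(\bz^p)\pmod{p\,\mathbb{Z}_p[[\bz]]}$. Since $S(\bz)$ has constant term~$1$, so does $S(\bz)^p$, which is therefore a unit in $\mathbb{Z}_p[[\bz]]$; dividing, $S(\bz^p)/S(\bz)^p\in 1+p\,\mathbb{Z}_p[[\bz]]$, and the matching of constant terms ($1$ on both sides) forces the correction to actually lie in $p\sum_{i=1}^d z_i\,\mathbb{Z}_p[[\bz]]$.

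For the reverse direction, write $S(\bz)=1+T(\bz)$ with $T(\bz)\in\sum_{i=1}^d z_i\mathbb{Q}_p[[\bz]]$, and prove by induction on $|\bm|=m_1+\cdots+m_d$ that every coefficient $s_\bm:=[\bz^\bm]S(\bz)$ lies in $\mathbb{Z}_p$. Assuming this for all $\bm'$ with $|\bm'|<|\bm|$, compare coefficients of $\bz^\bm$ on both sides of
\[
S(\bz^p)=S(\bz)^p\,U(\bz),\qquad U(\bz):=\frac{S(\bz^p)}{S(\bz)^p}\in 1+p\sum_{i=1}^d z_i\,\mathbb{Z}_p[[\bz]].
\]
The convolution contribution from $U(\bz)-1$ has all factors $[\bz^\bj]U\in p\mathbb{Z}_p$ multiplied against lower-degree coefficients of $S(\bz)^p$, which lie in $\mathbb{Z}_p$ by the induction hypothesis; this contribution therefore lies in $p\mathbb{Z}_p$, yielding $[\bz^\bm]S(\bz^p)\equiv [\bz^\bm]S(\bz)^p\pmod{p\mathbb{Z}_p}$. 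Expanding $S(\bz)^p=1+pT(\bz)+\sum_{k=2}^{p-1}\binom{p}{k}T(\bz)^k+T(\bz)^p$, the middle binomial terms contribute elements of $p\mathbb{Z}_p$ ($p\mid\binom{p}{k}$ together with the inductive integrality). Because $T$ has no constant term and $p\ge 2$, each summand in $[\bz^\bm]T(\bz)^p$ is a product $s_{\bm_1}\cdots s_{\bm_p}$ with $\sum\bm_i=\bm$ and each $|\bm_i|\le |\bm|-(p-1)<|\bm|$, so $[\bz^\bm]T(\bz)^p=[\bz^\bm]T_{<|\bm|}(\bz)^p$ where $T_{<|\bm|}$ is the truncation of $T$ below total degree $|\bm|$; this truncation has $\mathbb{Z}_p$-coefficients by the inductive hypothesis, so the Frobenius congruence applies and gives $[\bz^\bm]T(\bz)^p\equiv [\bz^\bm]T_{<|\bm|}(\bz^p)=[\bz^\bm]S(\bz^p)\pmod{p\mathbb{Z}_p}$. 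Assembling all pieces leaves $p\,s_\bm\equiv 0\pmod{p\mathbb{Z}_p}$, i.e.\ $s_\bm\in\mathbb{Z}_p$, completing the induction.

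The argument is essentially a faithful multi-variable transcription of the classical univariate Dieudonn\'e--Dwork lemma; the only obstacle is bookkeeping, namely checking that the Frobenius congruence $f(\bz)^p\equiv f(\bz^p)\pmod{p}$ holds coefficient-wise in the multi-variable setting and that a total-degree induction on $|\bm|$ cleanly interacts with the componentwise partial order on $\Z^d$. No genuinely new idea beyond the univariate case is needed.
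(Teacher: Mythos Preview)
Your proof is correct and follows essentially the same approach as the paper's: both directions rest on the Frobenius congruence $f(\bz)^p\equiv f(\bz^p)\pmod{p}$ for $f\in\mathbb{Z}_p[[\bz]]$, and the ``if'' direction proceeds by induction on the total degree $|\bm|$. The only organisational difference is that the paper expands $S(\bz)^p$ directly via the multinomial theorem (isolating the diagonal term $a_{\bn/p}^p$ and the cross-term $p\,a_{\bn}a_{\bnull}^{p-1}$), whereas you write $S=1+T$ and expand $(1+T)^p$ binomially, then invoke the Frobenius congruence on the truncation $T_{<|\bm|}$ to handle $T^p$; these are the same computation packaged two ways. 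One small point you leave implicit: when you say ``lower-degree coefficients of $S(\bz)^p$ lie in $\mathbb{Z}_p$ by the induction hypothesis,'' you are using that $[\bz^{\bj}]S(\bz)^p$ for $\bj\le\bm$, $\bj\ne\bm$, is a polynomial in the $s_{\bi}$ with $|\bi|\le|\bj|<|\bm|$ --- this is immediate, and the paper spells it out explicitly.
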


This lemma enables us to prove the following reduction of our problem.

\begin{lem} \label{lem:equivalencesansexp}
Given two formal  series 
$F(\bz)\in 1+\sum _{i=1} ^{d}z_i\mathbb{Z}[[\bz]]$ and 
$G(\bz)\in \sum _{i=1} ^{d}z_i\mathbb{Q}[[\bz]]$, let
$q(\bz):=\exp(G(\bz)/F(\bz))$. Then we have 
$
q(\bz) \in 1+\sum _{i=1} ^{d}z_i\mathbb{Z}_p[[\bz]]$
if and only if
$$
F(\bz)G(\bz^p)-pF(\bz^p)G(\bz) 
\in p\sum _{i=1} ^{d}z_i\mathbb{Z}_p[[\bz]].
$$
\end{lem}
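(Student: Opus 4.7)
The plan is to apply Lemma~\ref{lem:dwork2} directly to $S(\bz) = q(\bz)$ and then massage the resulting condition into the stated polynomial congruence on $F$ and $G$.

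First I would verify that $q(\bz)$ makes sense as an element of $1 + \sum_{i=1}^d z_i \mathbb{Q}_p[[\bz]]$. Since $F(\bnull) = 1$, the series $F(\bz)$ is invertible in $\mathbb{Z}_p[[\bz]]$, and so the quotient $G(\bz)/F(\bz)$ lies in $\sum_{i=1}^d z_i \mathbb{Q}_p[[\bz]]$ (it has no constant term). Because the argument of the exponential has vanishing constant term, $\exp(G(\bz)/F(\bz)) = \sum_{k\ge 0} (G/F)^k / k!$ is a well-defined formal power series with constant term $1$. Hence $q(\bz) \in 1 + \sum_{i=1}^d z_i \mathbb{Q}_p[[\bz]]$, and Lemma~\ref{lem:dwork2} tells us that $q(\bz) \in 1 + \sum_{i=1}^d z_i \mathbb{Z}_p[[\bz]]$ if and only if $q(\bz^p)/q(\bz)^p \in 1 + p\sum_{i=1}^d z_i \mathbb{Z}_p[[\bz]]$.

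Next I would compute
$$
\frac{q(\bz^p)}{q(\bz)^p}
= \exp\!\left(\frac{G(\bz^p)}{F(\bz^p)} - p\,\frac{G(\bz)}{F(\bz)}\right)
= \exp\bigl(H(\bz)\bigr),\quad
H(\bz) := \frac{F(\bz)G(\bz^p) - p F(\bz^p) G(\bz)}{F(\bz)F(\bz^p)}.
$$
Because $F(\bz)F(\bz^p) \in 1 + \sum_{i=1}^d z_i \mathbb{Z}_p[[\bz]]$ is a unit of $\mathbb{Z}_p[[\bz]]$, the condition $H(\bz) \in p \sum_{i=1}^d z_i \mathbb{Z}_p[[\bz]]$ is equivalent to the target condition $F(\bz)G(\bz^p) - p F(\bz^p) G(\bz) \in p \sum_{i=1}^d z_i \mathbb{Z}_p[[\bz]]$.

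The remaining task, and the only actual $p$-adic input, is to show that for any $H(\bz) \in \sum_{i=1}^d z_i \mathbb{Q}_p[[\bz]]$, one has
$$
\exp(H(\bz)) \in 1 + p\sum_{i=1}^d z_i \mathbb{Z}_p[[\bz]]
\iff H(\bz) \in p\sum_{i=1}^d z_i \mathbb{Z}_p[[\bz]].
$$
The ``$\Leftarrow$'' direction follows from $\exp(H) = \sum_{k\ge 0} H^k/k!$ together with $v_p(p^k/k!) = k - v_p(k!) \ge k - (k-1)/(p-1) \ge 1$ for all $k \ge 1$; the ``$\Rightarrow$'' direction uses $H = \log(1+(\exp(H)-1)) = \sum_{k\ge 1} (-1)^{k+1}(\exp(H)-1)^k/k$ together with $v_p(p^k/k) = k - v_p(k) \ge 1$ for all $k \ge 1$. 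Chaining these three equivalences yields the lemma. I expect no serious obstacle here; the only mild care is in the last step, where one must verify the elementary $p$-adic estimates for $1/k!$ and $1/k$ that make the exponential and logarithm preserve the submodule $p\sum_{i=1}^d z_i \mathbb{Z}_p[[\bz]]$.
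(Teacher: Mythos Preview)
Your proposal is correct and follows essentially the same route as the paper: apply Lemma~\ref{lem:dwork2} to $q=\exp(G/F)$, reduce the Dwork quotient $q(\bz^p)/q(\bz)^p$ to $\exp(H)$, and use the standard estimates $v_p(p^k/k!)\ge 1$ and $v_p(p^k/k)\ge 1$ to pass between $\exp(H)\in 1+p\sum_i z_i\Z_p[[\bz]]$ and $H\in p\sum_i z_i\Z_p[[\bz]]$, then clear the unit $F(\bz)F(\bz^p)$. The only cosmetic difference is that the paper first isolates the intermediate equivalence ``$\exp(S)\in 1+\sum_i z_i\Z_p[[\bz]] \iff S(\bz^p)-pS(\bz)\in p\sum_i z_i\Z_p[[\bz]]$'' for a general $S$ and then sets $S=G/F$, whereas you go straight to $H$; the $p$-adic content is identical.
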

These two lemmas are proved in Sections~\ref{sec:dwork2} 
and \ref{sec:equivalencesansexp}, respectively.

We write $B_\bN(\bm)=\prod _{j=1} ^{k}B(\bN^{(j)},\bm)$, where
\begin{equation} \label{eq:Bj}
B(\bP,\bm)=\frac {\left(
\sum _{i=1} ^{d}P_im_i\right)!} 
{
\prod _{i=1} ^{d}m_i!^{P_i}}
\end{equation}
for all vectors $\bP,\bm\in\Z^d$ with $\bP\ge\bnull$ and $\bm\ge\bnull$,
while we define $B(\bP,\bm)=0$ for vectors $\bm$ for which 
$m_i<0$ for some $i$.
(If we interpret factorials $n!$ as $\Ga(n+1)$, where $\Ga$ stands
for the gamma function, then
this convention is in accordance with the behaviour of the gamma function.)
Note that, using this notation, we have
$F_\bN(\bz)=\sum_{\bm\ge\bnull}^{}\bz^\bm
B_\bN(\bm)$ and $G_{\bL,\bN}(\bz)=\sum_{\bm\ge\bnull}^{}\bz^\bm
H_{\bL\cdot \bm}B_\bN(\bm)$.

As already mentioned, we have $F_\bN(z)\in 1+\sum _{i=1} ^{d}z_i\mathbb{Z}[[\bz]]$ and thus 
we can use Lemma~\ref{lem:equivalencesansexp} 
with $F(\bz)=F_\bN(\bz)$ and $G(\bz)=G_{\bL,\bN}(\bz)$.
The coefficient of $\bz^{\ba+p\bK}$ (with $0 \le a_i<p$ for all $i$)
in the Taylor expansion of the formal power series $F_\bN(\bz)G_{\bL,\bN}(\bz^p)-pF_\bN(\bz^p)G_{\bL,\bN}(\bz)$
can be written in the form
\begin{equation*}
C(\ba+p\bK) = \sum_{\bnull\le \bk\le \bK} B_\bN(\ba+p\bk)B_\bN(\bK-\bk) 
\Big(H_{
\bL\cdot (\bK-\bk)}- pH_{
(\bL\cdot \ba+p\bL\cdot \bk)}\Big).
\end{equation*}
Lemma~\ref{lem:equivalencesansexp} tells us that we have to show that $C(\ba+p\bK)$ is in $p\mathbb{Z}_p.$

To prove this, we will proceed step by step. 
First, because of the congruence~(\footnote{%
This is an immediate consequence 
of the  
identity $\displaystyle H_J=\sum_{j=1}^{\lfloor J/p\rfloor} \frac1{pj}+ \sum_{j=1, p\nmid j}^J \frac1j$.}) 
$$pH_{(\bL\cdot \ba+p\bL\cdot \bk)} \equiv 
H_{\fl{\frac {1} {p}\bL\cdot \ba}+\bL\cdot \bk} 
\mod p\mathbb{Z}_p,
$$ 
we obtain
\begin{equation*}
C(\ba+p\bK) \equiv \sum_{\bnull\le \bk\le \bK} B_\bN(\ba+p\bk)B_\bN(\bK-\bk) 
\Big(H_{
\bL\cdot(\bK-\bk)}- H_{
\fl{\frac {1} {p}\bL\cdot \ba}+\bL\cdot \bk}\Big) 
\mod p \mathbb{Z}_p.
\end{equation*}

Then, the following lemma (corresponding to (D3) in
Subsection~\ref{ssec:0}) is proved in Section~\ref{sec:333}.

\begin{lem} \label{lem:333}
For any prime $p$, vectors 
$\ba,\bk,\bL,\bN^{(1)}\in\Z^d$ with $\bk\ge\bnull$,
$\bnull\le\bL\le \bN^{(1)}$, and 
$0\le a_i<p$ for $i=1,2,\dots,d$, we have 
$$
B(\bN^{(1)},\ba+p\bk) \Big( H_{
\fl{\frac {1} {p}\bL\cdot \ba}+\bL\cdot \bk}-
 H_{\bL\cdot \bk} \Big) \in p\mathbb{Z}_p,
$$
where $B(\bN^{(1)},\ba+p\bk)$ is defined in \eqref{eq:Bj}.
\end{lem}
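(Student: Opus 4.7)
The plan is to reduce the estimate to a single-factorial statement and then exploit a short algebraic identity that turns the harmonic-sum factor into an integer, making the $p$-adic content immediate.

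First, I would reduce to the case $\bL=\bN^{(1)}$ by means of the multinomial-type identity
\[
B(\bN^{(1)},\bm)=B(\bL,\bm)\,B(\bN^{(1)}-\bL,\bm)\,\binom{\bN^{(1)}\cdot\bm}{\bL\cdot\bm},
\]
valid for $\bnull\le\bL\le\bN^{(1)}$ and $\bm\ge\bnull$, which exhibits $B(\bN^{(1)},\bm)/B(\bL,\bm)$ as a non-negative integer. Hence it is enough to prove
\[
B(\bL,\ba+p\bk)\bigl(H_{Q+M}-H_M\bigr)\in p\mathbb{Z}_p,\qquad\text{where }Q:=\fl{\bL\cdot\ba/p},\ M:=\bL\cdot\bk.
\]

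Next, I would extract the expected power of~$p$ from $B(\bL,\ba+p\bk)$. Since $\bL\cdot(\ba+p\bk)=p(Q+M)+r$ with $0\le r<p$, and each product $(pn+1)(pn+2)\cdots(pn+s)$ with $0\le s<p$ is coprime to $p$ (using also $0\le a_i<p$), combining this with the standard factorization $(pn)!=p^n n!\,V_n$ (where $V_n=\prod_{1\le j\le pn,\ p\nmid j}\,j\in\mathbb{Z}_p^\times$) gives
\[
B(\bL,\ba+p\bk)=U\cdot p^Q\cdot\frac{(Q+M)!}{\prod_{i=1}^d k_i!^{L_i}}=U\cdot p^Q\cdot\frac{(Q+M)!}{M!}\cdot B(\bL,\bk)
\]
for some $U\in\mathbb{Z}_p^\times$. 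Because $B(\bL,\bk)\in\mathbb{Z}$, it remains only to show that $\frac{(Q+M)!}{M!}(H_{Q+M}-H_M)\in\mathbb{Z}$.

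The decisive algebraic observation is that $(Q+M)!/M!=\prod_{j=1}^Q(M+j)$, whence
\[
\frac{(Q+M)!}{M!}\bigl(H_{Q+M}-H_M\bigr)=\sum_{i=1}^Q\prod_{\substack{j=1\\ j\ne i}}^Q(M+j),
\]
a sum of non-negative integers. Combining all the pieces, $B(\bL,\ba+p\bk)(H_{Q+M}-H_M)$ lies in $p^Q\mathbb{Z}_p\subseteq p\mathbb{Z}_p$ when $Q\ge 1$, and vanishes trivially when $Q=0$. The main conceptual obstacle is precisely spotting this last identity: once it is available the proof collapses to a couple of lines and, crucially, entirely sidesteps the intricate digit-sum and Legendre-formula manipulations used in the one-variable arguments of \cite{kr,kr2}, which do not extend cleanly to several variables.
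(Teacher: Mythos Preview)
Your argument is correct and takes a genuinely different route from the paper's proof. The paper works directly with $\bN^{(1)}$ and bounds each harmonic term $\frac{1}{\bL\cdot\bk+\ep}$ separately: it shows via Legendre's formula and a base-$p$ digit analysis that $v_p\big(B(\bN^{(1)},\ba+p\bk)\big)\ge 1+v_p(\bL\cdot\bk+\ep)$ for every $1\le\ep\le Q$. You instead (i) reduce to $\bL=\bN^{(1)}$ by the multinomial factorisation, (ii) compute the \emph{exact} power of $p$ in $B(\bL,\ba+p\bk)$ via $(pn)!=p^n n!\,V_n$, and (iii) clear all denominators at once with the identity $\frac{(Q+M)!}{M!}(H_{Q+M}-H_M)=\sum_{i=1}^Q\prod_{j\ne i}(M+j)\in\mathbb{Z}$. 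Your approach is shorter and more elementary, and in fact yields the sharper conclusion $B(\bL,\ba+p\bk)(H_{Q+M}-H_M)\in p^{Q}\mathbb{Z}_p$; the paper's approach, on the other hand, gives termwise control of the form $v_p\big(B(\bN^{(1)},\ba+p\bk)\big)\ge 1+\alpha$ with $\alpha=\max_\ep v_p(\bL\cdot\bk+\ep)$, which is a different kind of information and does not require the preliminary reduction from $\bN^{(1)}$ to $\bL$.
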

Since $B(\bN^{(1)},\ba+p\bk)$ is a factor of $B_\bN(\ba+p\bk)$,
it follows that
\begin{equation*}
C(\ba+p\bK) \equiv \sum_{\bnull\le \bk\le \bK} B_\bN(\ba+p\bk)B_\bN(\bK-\bk)
\big(H_{
\bL\cdot(\bK-\bk)}-H_{\bL\cdot \bk}\big) \mod p \mathbb{Z}_p.
\end{equation*}
For the right-hand side, we obviously have
\begin{multline}
\sum_{\bnull\le \bk\le \bK} B_\bN(\ba+p\bk)B_\bN(\bK-\bk)
\big(H_{
\bL\cdot(\bK-\bk)}-H_{\bL\cdot \bk}\big)
\\
= - \sum_{\bnull\le \bk\le \bK} H_{\bL\cdot \bk}
\big( B_\bN(\ba+p\bk)B_\bN(\bK-\bk)-B_\bN(\ba+p(\bK-\bk))B_\bN(\bk)\big).
\label{eq:obvious}
\end{multline}

We now use the multi-variable extension of the combinatorial lemma of Dwork 
(corresponding to (D4) in Subsection~\ref{ssec:0}; 
stated here as Lemma~\ref{lem:Dcomb} in Section~\ref{sec:comblemma},
with proof in the same section) in order
to decompose the sum over $\bk$.
Namely, if in Lemma~\ref{lem:Dcomb} we let $Z(\bk)=H_{\bL\cdot \bk}$,
$$W(\bk)=B_\bN(\ba+p\bk)B_\bN(\bK-\bk)-B_\bN(\ba+p(\bK-\bk))B_\bN(\bk),$$
and choose an integer $r$ that satisfies $p^{r-1}>\max\{K_1,K_2,\dots,K_d\}$,
then
\begin{multline} 
C(\ba+p\bK)
\equiv
-\sum_{s=0}^{r-1}\sum _{\bnull\le\bm\le (p^{r-s}-1)\bid} ^{}
\big(H_{
\sum _{i=1} ^{d}L_im_ip^s}-H_{
\sum _{i=1} ^{d}L_i\fl{\frac {m_i}p}p^{s+1}}\big)
\\
\cdot\sum _{p^s\bm\le\bk\le p^s(\bm+\bid)-\bid} ^{}
\big( B_\bN(\ba+p\bk)B_\bN(\bK-\bk)-B_\bN(\ba+p(\bK-\bk))B_\bN(\bk)\big)\\
\mod p \mathbb{Z}_p.
\label{eq:obvious2}
\end{multline}
(Since
for the first term appearing on the right-hand side of \eqref{eq:comb} we have
$Z(\bnull)\overline W_r(\bnull)=H_0\overline W_r(\bnull)=0$, 
the right-hand sides 
of~\eqref{eq:obvious} and~\eqref{eq:obvious2} are in fact equal.)

To deal with the sum over $\bk$ in~\eqref{eq:obvious2}, we invoke
Theorem~\ref{theo:1} 
(corresponding to (D5) in Subsection~\ref{ssec:0}).
(Its proof is given in Section~\ref{sec:congruence}).
We
show in Section~\ref{sec:reduction} that 
Theorem~\ref{theo:1} can be applied with $A=g=B_\bN$. 
Using this, we obtain
\begin{equation}\label{eq:consequencecongruence}
\sum_{p^s\bm\le\bk\le p^s(\bm+\bid)-\bid}^{} 
\big(
B_\bN(\ba+p\bk)B_\bN(\bK-\bk)-B_\bN(\ba+p(\bK-\bk))B_\bN(\bk)\big)
\in p^{s+1}B_\bN(\bm)\mathbb{Z}_p.
\end{equation}

We now have to deal with the harmonic sums 
$$
H_{\sum _{i=1} ^{d}L_im_ip^s}-H_{
\sum _{i=1} ^{d}L_i\fl{\frac {m_i}p}p^{s+1}}
$$
occurring 
on the right-hand side of~\eqref{eq:obvious2}. 
In this regard, we prove the following lemma in Section~\ref{sec:lem11}.
(As we show there, it can be reduced to Lemma~\ref{lem:333}.)

\begin{lem} \label{lem:11}
For all primes $p$, vectors
$\bm,\bL,\bN^{(1)},\bN^{(2)},\dots,\bN^{(d)}\in\Z^d$ with
$\bm,\bL,\bN^{(1)},\bN^{(2)},\break
\dots,\bN^{(d)}\ge\bnull$, we have
\begin{equation}\label{eq:congruenceharmonic}
B_\bN(\bm)\big(H_{
\sum _{i=1} ^{d}L_im_ip^s}-H_{
\sum _{i=1} ^{d}L_i\fl{\frac {m_i}p}p^{s+1}}\big) \in \frac{1}{p^s}\,\mathbb{Z}_p.
\end{equation}
\end{lem}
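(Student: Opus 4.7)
The plan is to iteratively apply the footnote identity
\[
H_J \;=\; \frac{1}{p}\,H_{\fl{J/p}} \;+\; T_J, \qquad T_J \;:=\; \sum_{\substack{1\le j\le J\\ p\,\nmid\, j}} \frac{1}{j}\;\in\;\Z_p,
\]
to ``peel off'' powers of $p$ from the indices appearing in $H_X-H_Y$ until the harmonic difference in~\eqref{eq:congruenceharmonic} matches the shape controlled by Lemma~\ref{lem:333}. First, I would write $\bm=\ba+p\bk$ with $0\le a_i<p$, and set $X:=\sum_i L_i m_i p^s = p^s(\bL\cdot\ba)+p^{s+1}(\bL\cdot\bk)$ and $Y:=\sum_i L_i\fl{m_i/p}\,p^{s+1} = p^{s+1}(\bL\cdot\bk)$; the claim \eqref{eq:congruenceharmonic} is then equivalent to $p^s B_\bN(\bm)(H_X-H_Y)\in\Z_p$.

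Next, I would apply the identity $s$ times to each of $H_X$ and $H_Y$. This is clean because both $X$ and $Y$ are divisible by $p^s$, and the error terms $T_{\fl{X/p^i}} - T_{\fl{Y/p^i}}$, each in $\Z_p$, carry a compensating factor $p^{s-i}$ for $0\le i<s$. Tracking them yields
\[
p^s(H_X-H_Y) \;\equiv\; H_{\bL\cdot\ba + p\bL\cdot\bk} - H_{p\bL\cdot\bk}\pmod{p\Z_p}.
\]
One further application, using $\fl{(\bL\cdot\ba+p\bL\cdot\bk)/p}=\fl{\bL\cdot\ba/p}+\bL\cdot\bk$ and $\fl{p\bL\cdot\bk/p}=\bL\cdot\bk$, gives
\[
H_{\bL\cdot\ba + p\bL\cdot\bk} - H_{p\bL\cdot\bk} \;\equiv\; \frac{1}{p}\bigl(H_{\fl{\bL\cdot\ba/p}+\bL\cdot\bk} - H_{\bL\cdot\bk}\bigr)\pmod{\Z_p}.
\]

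The right-hand side of the latter is precisely the quantity whose product with $B(\bN^{(1)},\ba+p\bk)$ is placed in $p\Z_p$ by Lemma~\ref{lem:333}. Since $B_\bN(\bm)=B(\bN^{(1)},\bm)\prod_{j=2}^{k}B(\bN^{(j)},\bm)$ with the second factor a positive integer, Lemma~\ref{lem:333} strengthens to
\[
B_\bN(\bm)\bigl(H_{\fl{\bL\cdot\ba/p}+\bL\cdot\bk} - H_{\bL\cdot\bk}\bigr)\in p\Z_p,
\]
so the $1/p$ is absorbed and this main term lands in $\Z_p$; the $\Z_p$-valued errors above, multiplied by the integer $B_\bN(\bm)$, stay in $\Z_p$. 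Assembling these yields $p^s B_\bN(\bm)(H_X-H_Y)\in\Z_p$, as required. The substantive content is entirely carried by Lemma~\ref{lem:333}; the only nontrivial matters here are the bookkeeping in the iterated reduction and the verification that the hypothesis $\bnull\le\bL\le\bN^{(1)}$ of Lemma~\ref{lem:333} is supplied by the intended application within the proof of Theorem~\ref{conj:1}.
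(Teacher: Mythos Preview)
Your proof is correct and follows essentially the same approach as the paper: both arguments write $\bm=\ba+p\bk$ and reduce the claim to Lemma~\ref{lem:333} by extracting the ``main term'' $\frac{1}{p^{s+1}}\bigl(H_{\fl{\bL\cdot\ba/p}+\bL\cdot\bk}-H_{\bL\cdot\bk}\bigr)$ from $H_X-H_Y$ and checking that the remaining terms already lie in $p^{-s}\Z_p$. The only difference is organizational: the paper performs a single splitting of the sum $\sum_{\ep=1}^{p^s\bL\cdot\ba}\frac{1}{p^{s+1}\bL\cdot\bk+\ep}$ according to whether $p^{s+1}\mid\ep$, whereas you reach the same decomposition by applying the identity $H_J=\frac{1}{p}H_{\fl{J/p}}+T_J$ iteratively $s{+}1$ times; your caveat about the implicit hypothesis $\bL\le\bN^{(1)}$ (needed for Lemma~\ref{lem:333} but not stated in Lemma~\ref{lem:11}) is likewise shared by the paper's own proof.
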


Consequently, putting the congruences~\eqref{eq:consequencecongruence} 
and~\eqref{eq:congruenceharmonic} together, 
it follows from~\eqref{eq:obvious2} that $C(\ba +p\bk)$ is congruent mod $p\mathbb{Z}_p$ 
to a multiple sum (over $s$ and $\bm$) whose terms  
are all in $p\mathbb{Z}_p$. Hence, we have 
established that
$$
C(\ba +p\bk)\in p\mathbb{Z}_p.
$$
This concludes our outline of the proof Theorem~\ref{conj:1}. 


\section{Proof of Lemma \ref{lem:dwork2}}
\label{sec:dwork2}

{\sc Proof of the ``only if" part.} 
We have to show that if $S(\bz)\in 1 + 
\sum _{i=1} ^{d}z_i\mathbb{Z}_p[[\bz]]$,
then
$$
\frac{S(\bz^p)}{S(\bz)^p} \in 1+p\sum _{i=1} ^{d}z_i\mathbb{Z}_p[[\bz]].
$$
To do this, we set $S(\bz)=\sum_{\bi\ge \bnull} a_{\bi}\bz^\bi$. 
The congruence 
$(u+v)^p\equiv u^p+v^p\mod p\mathbb{Z}_p$ and Fermat's Little Theorem
imply that
\begin{align*}
S(\bz)^p &= \bigg(\sum_{\bi\ge \bnull} a_{\bi}\bz^\bi\bigg)^p \equiv 
\sum_{\bi\ge \bnull} a_{\bi}^p\bz^{p\bi} \mod 
p\sum _{i=1} ^{d}z_i\mathbb{Z}_p[[\bz]]
\\
&\equiv \sum_{\bi\ge \bnull} a_{\bi}\bz^{p\bi} \mod p
\sum _{i=1} ^{d}z_i\mathbb{Z}_p[[\bz]].
\end{align*}
This means that $S(\bz)^p = S(\bz^p)+pH(\bz)$ with $H(\bz)\in 
\sum _{i=1} ^{d}z_i\mathbb{Z}_p[[\bz]]$. 
Hence,
$$
\frac{S(\bz^p)}{S(\bz)^p} = 1 -p\,\frac{H(\bz)}{S(\bz)^p} \in 1+p
\sum _{i=1} ^{d}z_i\mathbb{Z}_p[[\bz]],
$$
because the formal series $S(\bz) \in 1 + 
\sum _{i=1} ^{d}z_i\mathbb{Z}_p[[\bz]]$ is invertible
in $\mathbb{Z}_p[[\bz]]$.

\medskip

{\sc Proof of the ``if" part.}
Suppose that $S(\bz^p)=S(\bz)^pR(\bz)$ with
$R(\bz)=1+ p\sum_{\v\bi\ge \bid}b_{\bi}\bz^\bi \in 1+ 
p\sum _{i=1} ^{d}z_i\mathbb{Z}_p[[\bz]]$ and $S(\bnull)=1$. 
Set
$S(\bz)=\sum_{\bi\ge 0} a_{\bi}\bz^\bi$. We have $a_{\bnull}=1$, and 
we proceed by induction on $\v\bi$ to show that $a_{\bi}\in \mathbb{Z}_p$. 

So, let us assume that $a_{\bi}\in \mathbb{Z}_p$ for all
vectors $\bi\in\Z^d$ with $\v\bi\le r-1$.
Let $\bn\in\Z^d$ be a vector with $\v\bn=r$. The Taylor coefficient $C_{\bn}$ of
$\bz^\bn$
in $S(\bz^p)$ is
$$
\begin{cases}
a_{\frac {1} {p}\bn} \quad \textup{if }p\mid n_1,\ p\mid n_2,\dots,
\ p\mid n_d\,  ;
\\
0 \quad \textup{otherwise.}
\end{cases}
$$
The Taylor coefficient $C_{\bn}$ is at the same time also equal to the coefficient
of $\bz^\bn$ in the expansion of the series
$$
\bigg(\sum_{\bi\ge\bnull}
a_{\bi}\bz^\bi\bigg)^p\bigg(1+ p
\sum_{\bi\ge \bnull,  \, \v\bi\ge1}
b_{\bi}\bz^\bi\bigg).
$$
The coefficient of $\bz^\bn$ in this series is thus 
$
C_{\bn}=B_{\bn} + pD_{\bn},
$
where 
\begin{equation} \label{eq:Bn}
B_{\bn} = \sum_{\bi^{(1)}+\dots+\bi^{(p)}=\bn}
a_{\bi^{(1)}}\cdots a_{\bi^{(p)}} 
\end{equation}
and
\begin{equation} \label{eq:Dn}
D_{\bn} = 
\underset{\v{\bi^{(p+1)}}>0}
{\sum_{\bi^{(1)}+\dots+\bi^{(p+1)}=\bn}}
a_{\bi^{(1)}}\cdots a_{\bi^{(p)}}b_{\bi^{(p+1)}}
\in \mathbb{Z}_p.
\end{equation}

\medskip

{\sc Case 1}. If $p\mid n_1$, \dots, $p \mid n_d$, 
in the multiple sum $B_{\bn}$ a term
$\prod _{\ell=1} ^{m}a_{\bi_\ell}^{e_\ell}$ with $a_{\bi_{\ell_1}}\ne 
a_{\bi_{\ell_2}}$ occurs
\begin{equation} \label{eq:multinom}
\frac {(e_1+\cdots+e_m)!} {e_1!\cdots e_m!}=
\frac {p!} {e_1!\cdots e_m!}
\end{equation}
times.
The multinomial coefficient \eqref{eq:multinom} is an integer
divisible by $p$, except if $m=1$ and $e_1=p$; that is, if we are
looking at the term $a_{\frac {1} {p}\bn}^p$, which occurs with
coefficient $1$ in $B_\bn$.
The term $a_{\bn}$ appears in the form
$pa_{\bn}a_{\bnull}^{p-1}=pa_{\bn}$ in the expression \eqref{eq:Bn}
for $B_\bn$. 
For all other terms in the sum on the right-hand side of
\eqref{eq:Bn}, we have $\v{\bi^{(\ell)}}<\v\bn$ for
$\ell=1,2,\dots,p$. Hence, the induction hypothesis applies to all
the factors in the corresponding terms $a_{\bi^{(1)}}\cdots a_{\bi^{(p)}}$, 
whence $B_{\bn} =  p a_{\bn} + a_{\frac {1} {p}\bn}^p \mod  p\mathbb{Z}_p$.

In the multiple sum \eqref{eq:Dn} for $D_{\bn}$, the condition 
$\v{\bi^{(p+1)}}>0$ guarantees that
$\v{\bi^{(\ell)}}<\v\bn$ for $\ell=1, \ldots, p$, and therefore we can
apply the induction hypothesis to each factor
$a_{\bi^{(\ell)}}$. This shows that $D_{\bn}\in \mathbb{Z}_p.$ 

We therefore have
$$
a_{\frac {1} {p}\bn} = C_{\bn} \equiv p a_{\bn} +  
a_{\frac {1} {p}\bn}^p  \mod p\mathbb{Z}_p,
$$ 
whence,
$$
pa_{\bn} \equiv a_{\frac 1p\bn} - a_{\frac 1p\bn}^p  \mod
p\mathbb{Z}_p.
$$
This shows that $a_{\bn}\in \mathbb{Z}_p$ since $ a_{\frac 1p\bn} - a_{\frac 1p\bn}^p \in p\mathbb{Z}_p$ 
by Fermat's Little Theorem.

\medskip

{\sc Case 2}. If $p\nmid n_i$ for some $i$ between $1$ and $d$, 
the only change compared to the preceding case is that the term
$a_{\frac 1p\bn}^p$ does not occur. Therefore, in this case we have
$$
0=C_{\bn} \equiv p a_{\bn} \mod p \mathbb{Z}_p.
$$
Hence,
$$
pa_{\bn} \equiv   0 \mod p \mathbb{Z}_p,
$$
which shows again that $a_{\bn}\in \mathbb{Z}_p$.

This completes the proof of the lemma.\quad \quad \qed

\section{Proof of Lemma~\ref{lem:equivalencesansexp}}
\label{sec:equivalencesansexp}

We begin by showing that, if 
$S(\bz)\in\sum _{i=1} ^{d}z_i\mathbb{Q}_p[[\bz]]$, then
$$
\exp(S(\bz)) \in 1+ 
\sum _{i=1} ^{d}z_i\mathbb{Z}_p[[\bz]] 
\quad \text {if and only if}\quad 
S(\bz^p)-pS(\bz) \in 
p\sum _{i=1} ^{d}z_i\mathbb{Z}_p[[\bz]].
$$
The formal power series $\exp(X)$ and $\log(1+X)$ are defined by
their usual expansions.

\medskip

{\sc Proof of the ``if" part.}  By Lemma~\ref{lem:dwork2} with $S(\bz)$
replaced by $\exp(S(\bz))$, we have
$$
\exp\big(S(\bz^p)-pS(\bz)\big) \in 1+
p\sum _{i=1} ^{d}z_i\mathbb{Z}_p[[\bz]].
$$
Therefore, we have $S(\bz^p)-pS(\bz)=\log(1+pH(\bz))$ with 
$H(\bz)\in 
\sum _{i=1} ^{d}z_i\mathbb{Z}_p[[\bz]]$. This  yields
$$
S(\bz^p)-pS(\bz) = -\sum_{n=1}^{\infty} \frac{p^n}{n}(-H(\bz))^n \in 
p\sum _{i=1} ^{d}z_i\mathbb{Z}_p[[\bz]]
$$
since $v_p(p^n/n)\ge 1$ for all integers $n\ge 1$.

\medskip

{\sc Proof of the ``only if" part.} We have $S(\bz^p)-pS(\bz)=pJ(\bz)$
with $J(\bz)\in 
\sum _{i=1} ^{d}z_i\mathbb{Z}_p[[\bz]]$. Therefore, we have
$$
\exp\big(S(\bz^p)-pS(\bz)\big) =1+ \sum_{n=1}^{\infty} \frac{p^n}{n!}J(\bz)^n\in 1+ 
p\sum _{i=1} ^{d}z_i\mathbb{Z}_p[[\bz]],
$$
since 
$$
v_p\left(\frac {p^n}{n!}\right)= n-\sum_{k=1}^{\infty}\fl{\frac{n}{p^k}}
> n - \sum_{k=1}^{\infty}\frac{n}{p^k} = \frac{p-2}{p-1}\,n\ge 0.
$$
By Lemma~\ref{lem:dwork2} with $S(\bz)$ replaced by $\exp\big(S(\bz)\big)$, 
it follows that
$$
\exp\big(S(\bz)\big)\in 1+ 
\sum _{i=1} ^{d}z_i\mathbb{Z}_p[[\bz]].
$$

\medskip

In order to finish the proof of the lemma, we observe that for
$S=G/F$ with 
$F(\bz) \in 1+\sum _{i=1} ^{d}z_i\Z_p[[\bz]]$, 
we have the equivalence 
$$
S(\bz^p)-pS(\bz) \in 
p\sum _{i=1} ^{d}z_i\mathbb{Z}_p[[\bz]] 
\quad \text {if and only if}\quad 
F(\bz)G(\bz^p)-pF(\bz^p)G(\bz) \in 
p\sum _{i=1} ^{d}z_i\mathbb{Z}_p[[\bz]],
$$
since $F(\bz)$ is invertible in $\mathbb{Z}_p[[\bz]]$.\quad \quad
\qed

\section{Proof of Lemma \ref{lem:333}} 
\label{sec:333}

The proof below generalises Section~6 of \cite{kr} to higher
dimensions. However, it differs from the former even in the case
$d=1$, and thus provides an alternative argument.

For convenience, we shall drop the upper index in $N^{(1)}_i$ in this
section, that is, we write
$$B(\bN^{(1)},\bm)=\frac {(
\bN^{(1)}\cdot \bm)!} 
{\prod _{i=1} ^{d}m_i!^{N^{(1)}_i}}=
\frac {(\bN\cdot\bm)!} 
{
\prod _{i=1} ^{d}m_i!^{N_i}}.$$

We note that
the $p$-adic valuation of 
$B(\mathbf N,\ba+p\bk)$ is equal to
\begin{equation} \label{eq:111} 
v_p\big(B(\mathbf N,\ba+p\bk)\big)=
\sum _{\ell =1} ^{\infty}\left(\fl{\sum _{i=1} ^{d}\frac {N_i(a_i+pk_i)}
{p^\ell }}- \sum _{i=1} ^{d}N_i\fl{\frac {a_i+pk_i} {p^\ell }}\right).
\end{equation}
By definition of the harmonic numbers, we have
\begin{equation*}
H_{
\fl{\frac {1} {p}\bL\cdot \ba}+\bL\cdot\bk}-
 H_{\bL\cdot\bk}
 =\frac {1} {\bL\cdot\bk+1}+\frac {1} {\bL\cdot\bk+2}+\dots+
\frac {1} {\bL\cdot\bk+\fl{\frac {1} {p}\bL\cdot\ba}}.
\end{equation*}
It therefore suffices to show that
\begin{equation} \label{eq:100} 
v_p\big(B(\mathbf N,\ba+p\bk)\big)\ge 1+ 
\max_{1\le\ep\le \fl{\frac {1} {p}\bL\cdot\ba}}
v_p\bigg(\bL\cdot\bk+\ep\bigg).
\end{equation}

For a given integer 
$\ep$ with $1\le\ep\le \lfloor{\frac {1} {p}\bL\cdot\ba}\rfloor$, 
let $\al=v_p(\bL\cdot\bk+\ep)$. Furthermore, let $\ell$ be an integer with $1\le
\ell\le \al +1$. We write $k_i$ in the form
$$k_i=k_{i,0}+k_{i,1}p+\dots+k_{i,\ell-1}p^{\ell-1},$$
where $0\le k_{i,j}<p$ for $0\le j\le\ell-2$, and $k_{i,\ell-1}\ge0$. 
The reader should note
that this representation of $k_i$ is unique, with no upper bound on
$k_{i,\ell-1}$.
We substitute this in \eqref{eq:111}, to obtain
\begin{align}
\notag
v_p\big(B(\mathbf N,\ba+p\bk)\big)&\ge
\sum _{\ell =1} ^{\al +1}\Bigg(\fl{\sum _{i=1} ^{d}\frac 
{N_i\big(a_i+p(k_{i,0}+k_{i,1}p+\dots+k_{i,\ell-1}p^{\ell-1})\big)}
{p^\ell }}\\
\notag
&\kern3.9cm
- \sum _{i=1} ^{d}N_i\fl{\frac 
{a_i+p(k_{i,0}+k_{i,1}p+\dots+k_{i,\ell-1}p^{\ell-1})} {p^\ell
}}\Bigg)\\
&\ge
\sum _{\ell =1} ^{\al+1}\fl{\sum _{i=1} ^{d}\frac 
{N_i\big(a_i+p(k_{i,0}+k_{i,1}p+\dots+k_{i,\ell-2}p^{\ell-2})\big)}
{p^\ell }}.
\label{eq:3331}
\end{align}
It should be noted, that from the first to the second line the terms
containing $k_{i,\ell-1}$ cancel because they can be put outside of
the floor functions. Subsequently, in the second sum, there remains
the term
$$\fl{\frac 
{a_i+p(k_{i,0}+k_{i,1}p+\dots+k_{i,\ell-2}p^{\ell-2})} {p^\ell }},$$
which vanishes, since $0\le a_i,k_{i,0},k_{i,1},\dots,k_{i,\ell-2}<p$.

On the other hand, we have 
\begin{align} 
\notag
\ep+\sum _{i=1} ^{d}L_ik_i&\le \sum _{i=1} ^{d}L_ik_i+\frac {1}
{p}\sum _{i=1} ^{d}L_ia_i\\ 
\notag
&\le \frac {1} {p} \sum _{i=1} ^{d}L_i(a_i+pk_i)\\
&\le \frac {1} {p} \sum _{i=1} ^{d}L_i(a_i+k_{i,0}p+k_{i,1}p^2+\dots+
k_{i,\ell-2}p^{\ell-1})+
p^{\ell-1}\sum _{i=1} ^{d}L_ik_{i,\ell-1}.
\label{eq:3332}
\end{align}
Since $p^\al \,\big\vert \,\big(\ep+\sum _{i=1} ^{d}L_ik_i\big)$ and $\ell\le \al+1$, 
we have also
$p^{\ell-1}\,\big\vert\, \big(\ep+\sum _{i=1} ^{d}L_ik_i\big)$, which implies that
$$p^{\ell-1}\,\bigg\vert\, \bigg(\ep+\sum _{i=1} ^{d}L_ik_i-p^{\ell-1}\sum _{i=1}
^{d}L_ik_{i,\ell-1}\bigg).$$
On the other hand, we have
\begin{align*}
\ep+\sum _{i=1} ^{d}L_ik_i-p^{\ell-1}\sum _{i=1}
^{d}L_ik_{i,\ell-1}&=\ep+
\sum _{i=1} ^{d}L_i(k_{i,0}+k_{i,1}p+\dots+
k_{i,\ell-2}p^{\ell-2})\\
&\ge \ep>0,
\end{align*}
whence, by \eqref{eq:3332},
\begin{align*}
p^{\ell-1}&\le \ep+\sum _{i=1} ^{d}L_ik_i-p^{\ell-1}\sum _{i=1}
^{d}L_ik_{i,\ell-1}\\
&\le \frac {1} {p} \sum _{i=1} ^{d}L_i(a_i+k_{i,0}p+k_{i,1}p^2+\dots+
k_{i,\ell-2}p^{\ell-1}),\\
\end{align*}
or, equivalently,
$$1\le \frac {1} {p^{\ell}} \sum _{i=1} ^{d}N_i(a_i+k_{i,0}p+k_{i,1}p^2+\dots+
k_{i,\ell-2}p^{\ell-1}).$$
That is to say, the summand of the sum over $\ell$ in \eqref{eq:3331} 
is at least $1$. 
Since $\ell$ was restricted to
$1\le\ell\le \al +1$, this implies that $v_p\big(B(\mathbf
N,\ba+p\bk)\big)\ge \al +1$. The claim \eqref{eq:100} follows
immediately, which finishes the proof of the lemma.\quad \quad \qed

\section{A combinatorial lemma}\label{sec:comblemma}

In this section, we generalise 
a combinatorial lemma due to Dwork
(see~\cite[Lemma 4.2]{dwork}) to several variables. 
\begin{lem} \label{lem:Dcomb}
Let $r$ be a non-negative integer, 
let $Z$ and $W$ be maps from $\Z^d$ to a ring $R$, and let
$$\overline W_r(\bm)=
\sum_{p^r\bm\le\bk\le p^r(\bm+\bid)-\bid}^{} 
W(\bk).
$$
Then
\begin{multline} \label{eq:comb}
\sum _{\bnull\le\bk\le(p^r-1)\bid}
Z(\bk)W(\bk)=
Z(\bnull)\overline W_r(\bnull)\\+
\sum _{s=0} ^{r-1}\Bigg(
\sum _{\bnull\le\bm\le(p^{r-s}-1)\bid}
\left(Z(m_1p^s,\dots,m_dp^s)-Z\left(\fl{\tfrac {m_1}p}p^{s+1},\dots,
\fl{\tfrac {m_d}p}p^{s+1}\right)\right)
\overline W_s(\bm)\Bigg).
\end{multline}
\end{lem}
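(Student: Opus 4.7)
The plan is to mimic Dwork's one-variable telescoping proof, carried out coordinate-wise via a truncation map. For $s\ge0$, I would introduce the map $T_s:\Z_{\ge0}^d\to\Z_{\ge0}^d$ defined by
$$T_s(\bk)=\Bigl(\fl{k_1/p^s}p^s,\,\fl{k_2/p^s}p^s,\,\dots,\,\fl{k_d/p^s}p^s\Bigr).$$
For any $\bk$ in the box $\bnull\le\bk\le(p^r-1)\bid$, we have $T_0(\bk)=\bk$ and $T_r(\bk)=\bnull$, so the telescoping identity
$$Z(\bk)=Z(\bnull)+\sum_{s=0}^{r-1}\bigl(Z(T_s(\bk))-Z(T_{s+1}(\bk))\bigr)$$
holds unconditionally.

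Next I would multiply this identity by $W(\bk)$, sum over $\bnull\le\bk\le(p^r-1)\bid$, and interchange the $\bk$- and $s$-summations. The constant term contributes exactly $Z(\bnull)\overline{W}_r(\bnull)$. For the $s$-th telescoping term, I would regroup the sum over $\bk$ according to the vector $\bm=(\fl{k_1/p^s},\dots,\fl{k_d/p^s})$. The crucial observation is that the fibres of this ``quotient-by-$p^s$'' map partition the original box: for each $\bm$ with $\bnull\le\bm\le(p^{r-s}-1)\bid$, the preimage is precisely the sub-box $p^s\bm\le\bk\le p^s(\bm+\bid)-\bid$, and on this sub-box both $T_s(\bk)=(m_1p^s,\dots,m_dp^s)$ and $T_{s+1}(\bk)=(\fl{m_1/p}p^{s+1},\dots,\fl{m_d/p}p^{s+1})$ are constant. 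Hence the inner sum of $W(\bk)$ over the fibre collapses to $\overline{W}_s(\bm)$, matching the right-hand side of \eqref{eq:comb} term by term.

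Conceptually there is no obstacle beyond Dwork's one-variable insight; the entire argument reduces to the telescoping identity plus a partitioning of the hypercube $[\bnull,(p^r-1)\bid]$ into axis-aligned sub-boxes of side $p^s$. The only thing to check carefully is the bookkeeping: verifying that the $\bm$-range $\bnull\le\bm\le(p^{r-s}-1)\bid$ is exactly the image of the quotient map, and that the two truncations $T_s$ and $T_{s+1}$ evaluate as claimed on each fibre. Both are immediate from the definitions of the floor function and the base-$p$ expansion applied to each coordinate independently.
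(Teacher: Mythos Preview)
Your proposal is correct and is essentially the same proof as in the paper: both arguments rest on the telescoping identity $Z(\bk)-Z(\bnull)=\sum_{s=0}^{r-1}\bigl(Z(T_s(\bk))-Z(T_{s+1}(\bk))\bigr)$ together with the partition of the hypercube into the fibres of $\bk\mapsto\lfloor\bk/p^s\rfloor$. The only cosmetic difference is the order of presentation---the paper first defines the $\bm$-indexed sums $X_s,Y_s$ and then unfolds them into $\bk$-sums before telescoping, whereas you telescope pointwise in $\bk$ first and regroup by $\bm$ afterwards.
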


\begin{proof}
Let 
$$
X_s =
\sum _{\bnull\le\bm\le(p^{r-s}-1)\bid}
Z(m_1p^s,\dots,m_dp^s)\overline W_s(\bm)
$$
and 
$$
Y_s = 
\sum _{\bnull\le\bm\le(p^{r-s}-1)\bid}
Z\left(\fl{\tfrac {m_1}p}p^{s+1},\dots,
\fl{\tfrac {m_d}p}p^{s+1}\right)
\overline W_s(\bm).
$$

By definition, we have 
$$
X_s= 
\sum _{\bnull\le\bm\le(p^{r-s}-1)\bid}\Bigg(
\sum_{p^s\bm\le\bk\le p^s(\bm+\bid)-\bid}^{} 
Z(m_1p^s,\dots,m_dp^s) W(k_1, \dots, k_d)\Bigg).
$$
For $k_j\in \big\{m_jp^s, \dots, (m_j+1)p^s-1 \big\}$,  
we have $m_j=\fl{k_j/p^s}$, $j=1, \dots, d$, 
and furthermore we have the partition 
$$
\big\{0, \dots ,p^r-1\big\}^d= 
\bigcup_{\bnull\le\bm\le(p^{r-s}-1)\bid}^{}
\prod_{j=1}^d\big\{m_jp^s, \dots, (m_j+1)p^s-1\big\}.
$$
Hence, it follows that 
$$
X_s = 
\sum _{\bnull\le\bk\le(p^{r}-1)\bid}
Z\left(\fl{\frac{k_1}{p^s}}p^s, \dots, \fl{\frac{k_d}{p^s}}p^s\right)W(k_1, \dots, k_d).
$$

Similarly, we have 
$$
Y_s=
\sum _{\bnull\le\bk\le(p^{r}-1)\bid}
Z\left(\fl{\frac{k_1}{p^{s+1}}}p^{s+1}, \dots, \fl{\frac{k_d}{p^{s+1}}}p^{s+1}\right)W(k_1, \dots,
k_d),
$$
where we used that  
$\fl{\frac1p \fl{\frac{k}{p^s}}}= \fl{\frac{k}{p^{s+1}}}$.
We therefore have
\begin{align*}
\sum_{s=0}^{r-1} (X_s-Y_s)&= 
\sum _{\bnull\le\bk\le(p^{r}-1)\bid}
W(k_1, \dots, k_d)  
\\
&\quad \times \sum_{s=0}^{r-1}\left( Z\left(\fl{\frac{k_1}{p^s}}p^s, \dots, \fl{\frac{k_d}{p^s}}p^s\right)
-   
Z\left(\fl{\frac{k_1}{p^{s+1}}}p^{s+1}, \dots, \fl{\frac{k_d}{p^{s+1}}}p^{s+1}\right)
 \right)
\\
& = 
\sum _{\bnull\le\bk\le(p^{r}-1)\bid}
W(\bk) \big( Z(\bk)-Z(\bnull) \big),
\end{align*}
because the sum over $s$ is a telescoping sum. Since 
$$
\sum _{\bnull\le\bk\le(p^{r}-1)\bid}
W(\bk) = \overline{W}_r(\bnull), 
$$
this completes the proof of the lemma.
\end{proof}

\section{Proof of Theorem~\ref{theo:1}}\label{sec:congruence}

We adapt Dwork's proof \cite[Theorem 1.1]{dwork} 
of the special case $d=1$, that is, the case 
in which there is just one variable.

For integer vectors $\bk,\bK,\bv\in\mathbb{Z}$ with 
$\bk\ge \bnull$ and $0\le v_i<p$ for $i=1,2,\dots,d$, set
$$
U(\bk,\bK) = 
A(\bv +p(\bK-\bk))A(\bk) - A(\bv +p\bk)A(\bK-\bk),
$$
being $0$ if $k_i>K_i$ for some $i$, which is the case in particular 
if $K_i<0$ for some $i$.
Furthermore, for a vector $\bm\in\Z^d$ with $\bm\ge \bnull$, set
\begin{equation} \label{eq:Hdef}
H(\bm,\bK; s )
= 
\sum_{p^s\bm\le\bk\le p^s(\bm+\bid)-\bid}
U(\bk,\bK),
\end{equation}
being $0$ if $K_i<0$ for some $i$. (The reader should recall that, by
definition, $\bid$ is the all $1$ vector.)
We omit to indicate the dependence on $p$ and $\bv$ in order to
not overload notation.

\begin{lem} \label{lem:3}
Let $\bk,\bK,\bv\in\mathbb{Z}$ with 
$\bk\ge \bnull$ and $0\le v_i<p$ for $i=1,2,\dots,d$. Then there hold
the following three facts:
\begin{enumerate}
\item[$(i)$]  We have $U(\bK-\bk, \bK) = - U(\bk,\bK)$.
\item[$(ii)$] 
For all integer vectors $\bM$ with $p^{s+1}(\bM+\bid)>\bK$, we have
$$
\sum_{\bnull\le\bm\le\bM} H(\bm,\bK; s) = 0.
$$
\item[$(iii)$] We have
$$
H(\bk,\bK; s+1 ) = \sum_{\bnull\le \bi\le(p-1)\bid}
H(\bi+p\bk,\bK; s ).
$$
\end{enumerate}
\end{lem}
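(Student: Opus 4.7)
The three parts are of rather different character: (i) and (iii) are formal manipulations, while (ii) is the substantive symmetry statement and rests on (i).

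For part (i), I would simply substitute $\bk \mapsto \bK - \bk$ directly into
\[
U(\bk,\bK) = A(\bv + p(\bK - \bk))A(\bk) - A(\bv + p\bk)A(\bK - \bk).
\]
The two summands interchange and acquire an overall minus sign, so $U(\bK - \bk, \bK) = -U(\bk, \bK)$ is immediate.

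For part (iii), I would write each side as an iterated sum of $U(\bj, \bK)$ over $\bj$-cubes and observe that the cubes coincide. The left-hand side sums over $\bj \in [p^{s+1}\bk,\, p^{s+1}(\bk+\bid) - \bid]$. For the right-hand side, the inner sum at a fixed $\bi$ ranges over $[p^s(\bi + p\bk),\, p^s(\bi + p\bk + \bid) - \bid] = [p^{s+1}\bk + p^s\bi,\, p^{s+1}\bk + p^s(\bi+\bid) - \bid]$; as $\bi$ runs over $[\bnull,(p-1)\bid]$, these smaller cubes partition the big cube, so the two sides agree term by term.

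For part (ii), I would first collapse the double sum, using disjointness of the $\bk$-cubes that parametrise $H(\bm,\bK;s)$, to
\[
\sum_{\bnull \le \bm \le \bM} H(\bm, \bK; s) \;=\; \sum_{\bnull \le \bk \le p^s(\bM+\bid)-\bid} U(\bk, \bK).
\]
Then I would pin down the support of $U(\cdot, \bK)$: if $k_i < 0$ then $A(\bk)=0$ kills the first summand, and $v_i + pk_i \le (p-1) - p < 0$ kills the factor $A(\bv + p\bk)$ in the second; symmetrically, if $k_i > K_i$ then both summands vanish via $A(\bK-\bk)=0$ and $v_i + p(K_i - k_i) < 0$. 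Hence the support of $U(\cdot,\bK)$ is confined to $[\bnull, \bK]$. Given that the hypothesis on $\bM$ places this support inside the summation box, I would invoke the involution $\bk \mapsto \bK - \bk$: by (i), paired terms contribute $U(\bk,\bK) + U(\bK-\bk,\bK) = 0$, and the possible fixed point $\bk = \bK/2$ satisfies $U(\bk,\bK) = A(\bv+p\bk)A(\bk) - A(\bv+p\bk)A(\bk) = 0$ directly. The sum therefore vanishes.

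The main obstacle is the geometric check inside (ii): verifying that the hypothesis on $\bM$ really does place the $\bk$-support $[\bnull,\bK]$ of $U$ inside the summation cube $[\bnull, p^s(\bM+\bid) - \bid]$, so that the involution $\bk \mapsto \bK - \bk$ maps the effective summation region to itself. Once this inclusion is in hand, the antisymmetry from (i) closes the argument without difficulty.
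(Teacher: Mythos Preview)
Your approach is correct and essentially identical to the paper's: the paper also collapses the double sum in (ii) to $\sum_{\bnull\le\bk\le p^s(\bM+\bid)-\bid}U(\bk,\bK)$, drops the terms with $k_i>K_i$ to reduce to $\sum_{\bnull\le\bk\le\bK}U(\bk,\bK)$, and then invokes the antisymmetry (i); your treatment of (i) and (iii) matches the paper's (which dismisses both as obvious/straightforward), and your explicit handling of the fixed point $\bk=\bK/2$ is a small bonus the paper omits.

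Your flagged ``main obstacle'' is in fact a genuine typo in the lemma's statement: the hypothesis $p^{s+1}(\bM+\bid)>\bK$ does \emph{not} guarantee $[\bnull,\bK]\subseteq[\bnull,p^s(\bM+\bid)-\bid]$; the paper's own proof silently uses the stronger condition $p^s(\bM+\bid)>\bK$, and that is what should appear in the statement. This causes no trouble downstream, since the only application (in the proof of Theorem~\ref{theo:1}) takes $\bM$ ``sufficiently large,'' where both conditions hold.
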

\begin{proof} The assertion $(i)$ is obvious.

\medskip 

$(ii)$ We have 
\begin{align*}
\sum_{\bnull\le\bm\le\bM} H(\bm,\bK; s ) 
&= 
\sum_{\bnull\le\bm\le\bM}\Bigg(
\sum_{p^s\bm\le\bk\le p^s(\bm+\bid)-\bid} U(\bk,\bK)\Bigg)
\\
&= \sum_{\bnull\le\bk \le p^s(\bM+\bid)-\bid}
U(\bk,\bK)
\\
&= \sum_{\bnull\le\bk\le\bK} U(\bk,\bK)
\\
&=0.
\end{align*}
Here, in order to pass from the second to the third line, we used the
fact that $U(\bk,\bK)=0$ if $k_i>K_i$ for some $i$ between $1$ and $d$. 
To obtain the last
line, we used the functional equation given in $(i)$.

\medskip

$(iii)$ We have 
$$
\sum_{\bnull\le\bi\le(p-1)\bid}
H(\bi+p\bk,\bK; s) = 
\sum_{\bnull\le\bi\le(p-1)\bid}\Bigg(
\sum_{p^s(\bi+p\bm)\le\bk\le p^s(\bi+p\bm+\bid)-\bid} U(\bk,\bK)\Bigg),
$$
and it is rather straightforward to see that this sum
simply equals
$
H(\bm,\bK; s +1).
$
\end{proof}

\begin{proof}[Proof of Theorem~\ref{theo:1}]

We define two assertions, denoted by $\al_s$ and $\beta_{t,s}$, in
the following way:
for all $s\ge 0$, $\al_s$ is the assertion that the congruence 
$$
H(\bm,\bK; s ) \equiv 0 \mod  p^{s+1} g(\bm) \mathbb{Z}_p
$$
holds for all vectors $\bm,\bK\in\Z^d$ with $\bm\ge \bnull$.

For all integers $s$ and $t$ with $0\le t\le s$, $\beta_{t,s}$ is the assertion
that the congruence 
\begin{equation} \label{eq:bts}
H(\bm,\bK+p^s\bm; s ) \equiv  
\sum_{\bnull\le\bk\le(p^{s-t}-1)\bid} 
\frac{A(\bk+p^{s-t}\bm)}{A(\bk)} 
\,H(\bk,\bK; t )
\mod p^{s+1}g(\bm)\mathbb{Z}_p
\end{equation}
holds for all vectors $\bm,\bK\in\Z^d$ with $\bm\ge 0$.

Moreover, we define three further assertions $A1, A2, A3$:

$A1$: for all vectors $\bk,\bK\in\Z^d$ with $\bk\ge 0$,
we have $U(\bk,\bK ) \in pg(\bk)\mathbb{Z}_p.$

\medskip

$A2$: for all vectors $\bm,\bk,\bK\in\Z^d$ and integers $s\ge0$ with
$\bm\ge\bnull$ and $0\le k_i<p^s$ for $i=1,2,\dots,d$, we have 
$$
U(\bk+p^s\bm,\bK+p^s\bm ) \equiv \frac{A(\bk+p^s\bm)}{A(\bk)} 
U(\bk,\bK ) \mod p^{s+1}g(\bm)\mathbb{Z}_p.
$$

\medskip

$A3$: for all integers $s$ and $t$ with $0\le t<s$, 
we have 
$$\text {``$\al_{s-1}$ and $\be_{t,s}$ together imply $\be_{t+1,s}$.''}
$$

In the following, we shall first show that Assertions~$A1$, $A2$, $A3$
altogether imply Theorem~\ref{theo:1}, see the ``first step" below.
Subsequently, in the ``second step," we show that Assertions~$A1$, 
$A2$, $A3$ hold indeed.

\medskip
{\sc First step.} We claim that Theorem~\ref{theo:1} follows from
$A1$, $A2$ and $A3$.
So, from now on we shall assume that $A1$, $A2$ and $A3$ are true.
Our goal is to show that $\al_{s}$ holds for all $s\ge 0$.
We shall accomplish this by induction on $s\ge 0$. 

We begin by establishing $\al_0$. To do so, we observe that
\begin{equation} \label{eq:H=U}
H(\bm,\bK; 0 )=U(\bm,\bK),
\end{equation}
that is, that Assertion~$\al_0$ is equivalent to $A1$. 
Hence, Assertion $\al_0$ is true.

We now suppose that $\al_{s-1}$ is true. 
We shall show by induction on $t\ge 0$ that $\beta_{t,s}$ is
true for all $t\le s.$ Because of $A3$, it suffices to prove that
$\beta_{0,s}$ is true. To do so, we see that
\begin{align} \notag
\sum_{\bnull\le\bk\le(p^s-1)\bid} \frac{A(\bk+p^s\bm)}{A(\bk)}&
\,H(\bk,\bK; 0 )\\
\notag
& =\sum_{\bnull\le\bk\le(p^s-1)\bid} \frac{A(\bk+p^s\bm)}{A(\bk)}
\,U(\bk,\bK) 
\\
\notag
&\equiv \sum_{\bnull\le\bk\le(p^s-1)\bid}
U(\bk+p^s\bm,\bK+p^s\bm ) \mod p^{s+1}g(\bm)\mathbb{Z}_p
\\
&\equiv H(\bm,\bK+p^s\bm; s ) \mod p^{s+1}g(\bm)\mathbb{Z}_p.
\label{eq:Hcong}
\end{align}
Here, the first equality results from \eqref{eq:H=U}, the subsequent
congruence results from $A2$, and the last line is obtained by
remembering the definition \eqref{eq:Hdef} of $H$
(there holds in fact equality between the last two lines).
The congruence \eqref{eq:Hcong} is nothing else but Assertion~$\beta_{0,s}$, 
which is therefore proved under our assumptions.

The above argument shows in particular that
Assertion~$\beta_{s,s}$ is true, which means that we have
the congruence 
\begin{equation}\label{eq:betass}
H(\bm,\bK+p^s\bm; s ) 
\equiv \frac{A(\bm)}{A(\bnull)}\,H(\bnull,\bK; s ) 
\mod p^{s+1}g(\bm)\mathbb{Z}_p.
\end{equation}

Let us now consider the property $\gamma_{\bK}$ defined by
$$\text {$\gamma_{\bK}$: \quad 
$H(\bnull,\bK; s ) \equiv 0 
\mod p^{s+1}\mathbb{Z}_p$.}
$$ 
This property holds certainly if $K_i<0$ for some $i$
because in that case each term of the multiple sum that defines $H$ vanishes. 
We want to show that the assertion also holds when $\bK\ge\bnull$.
Let $\bK'$ be one of the vectors of non-negative integers (if there
is at all) such that $\v{\bK'}=K'_1+K'_2+\dots+K'_d$ is minimal and 
$\gamma_{\bK'}$ does {\it not\/} hold. 
Let $\bm\in\Z^d$ be a vector with $\bm \ge \bnull$ and $\v\bm>0$, 
and set $\bK=\bK'-p^s\bm$.  Since $\v\bK<\v\bK'$, we have
$$
H(\bnull,\bK; s ) \equiv 0 
\mod p^{s+1}\mathbb{Z}_p
$$
because $\gamma_{\bK}$ holds by minimality of $\bK'$. 
Since $A(\bm)/A(\bnull)\in\mathbb{Z}_p$ by Properties~$(i)$
and $(ii)$ in the statement of Theorem~\ref{theo:1}, 
it follows from~\eqref{eq:betass} that
\begin{equation}\label{eq:HKJprime}
H(\bm,\bK'; s ) 
\equiv 0
\mod p^{s+1}\mathbb{Z}_p
\end{equation}
provided $\bm \ge \bnull$ et $\v\bm>0$. 

However, by Lemma~\ref{lem:3}, $(ii)$, we know that
$$
\sum_{\bnull\le\bm\le\bM} H(\bm,\bK'; s ) =0
$$
if one chooses $\bM$ sufficiently large. Isolating the term
$H(\bnull,\bK'; s )$, this equation can be rewritten as
$$
H(\bnull,\bK'; s )=-\underset{\v\bm>0}{\sum_{\bnull\le \bm\le \bM}} 
H(\bm,\bK'; s ).
$$
The sum on the right-hand side is congruent to
$0$~mod~$p^{s+1}$ by~\eqref{eq:HKJprime}, whence
$$
H(\bnull,\bK'; s ) \equiv 0 \mod p^{s+1}.
$$
This means that $\gamma_{\bK'}$ is true, which is absurd.
Assertion~$\gamma_{\bK}$ is therefore true for all 
$\bK\in \mathbb{Z}^d$.

Let us now return to Assertion~$\be_{s,s}$, which is displayed
explicitly in \eqref{eq:betass}.
We have just shown that 
$H(\bnull,\bK; s ) \equiv 0 \mod p^{s+1}$, while $A(\bm)/A(\bnull)\in 
g(\bm)\mathbb{Z}_p$ by Properties~$(i)$ and $(ii)$ in the statement of
Theorem~\ref{theo:1}. Hence, we have also
$$
H(\bm,\bK+p^s\bm; s ) 
\equiv 0
\mod p^{s+1}g(\bm)\mathbb{Z}_p.
$$ 
By replacing $\bK$ by $\bK-p^s\bm $ (which is possible
because $\bK$ can be chosen freely from $\mathbb{Z}^d$), we see that
this is nothing else but Assertion~$\al_{s}$. 
Thus, Theorem~\ref{theo:1} follows indeed from
the truth of $A1, A2$ and $A3$.

\medskip

{\sc Second step.} 
It remains to prove Assertions~$A1$, $A2$ and $A3$ themselves, 
which we shall do in this order.

\smallskip
{\sl Proof of $A1$.} The assertion holds if $k_i>K_i$ or if $K_i<0$ 
for some $i$.
If $\bK\ge\bk\ge \bnull$, we have 
\begin{multline*}
U(\bk,\bK) = 
A(\bK-\bk)A(\bv )\left(\frac{A(\bv +p\bk)}{A(\bv )}-\frac{A(\bk)}{A(\bnull)}\right) 
\\
+
A(\bk)A(\bv )\left(\frac{A(\bK-\bk)}{A(\bnull)} - \frac{A(\bv
+p(\bK-\bk))}{A(\bv )}\right) .
\end{multline*}
Property~$(iii)$ in the statement of Theorem~\ref{theo:1} with
$\bu=\bnull$, $\bn=\bk$, $s=0$ says that
$$
\frac{A(\bv +p\bk)}{A(\bv )}-\frac{A(\bk)}{A(\bnull)}\in p \,\frac{g(\bk)}{g(\bv )}\,\mathbb{Z}_p
$$
while its special case in which $\bu=\bnull$, $\bn=\bK-\bk$, $s=0$
reads
$$
\frac{A(\bK-\bk)}{A(\bnull)} - \frac{A(\bv +p(\bK-\bk))}{A(\bv )}
\in p \,\frac{g(\bK-\bk)}{g(\bv )}\,\mathbb{Z}_p.
$$
Hence,
\begin{equation*}
A(\bK-\bk)A(\bv )\left(\frac{A(\bv +p\bk)}{A(\bv )}-\frac{A(\bk)}{A(\bnull)}\right) 
\in p \,g(\bk)A(\bK-\bk)\frac{A(\bv )}{g(\bv )}\,\mathbb{Z}_p \subseteq p g(\bk)\,\mathbb{Z}_p
\end{equation*}
and
\begin{equation*}
A(\bk)A(\bv )\left(\frac{A(\bK-\bk)}{A(\bnull)} - \frac{A(\bv
+p(\bK-\bk))}{A(\bv )}\right)
\\
\in p g(\bk)g(\bK-\bk)\frac{A(\bk)}{g(\bk)}\frac{A(\bv )}{g(\bv )}\,\mathbb{Z}_p 
\subseteq p g(\bk)\,\mathbb{Z}_p,
\end{equation*}
where the inclusion relations result from Property~$(ii)$ in the statement
of Theorem~\ref{theo:1}. 
It therefore follows that
$$
U(\bk,\bK) \in  pg(\bk)\mathbb{Z}_p,
$$
which proves Assertion~$A1$.

\smallskip
{\sl Proof of $A2$.} By a straightforward calculation, we have
\begin{multline*}
U(\bk+p^s\bm,\bK+p^s\bm) -\frac{A(\bk+p^s\bm)}{A(\bk)} 
U(\bk,\bK) 
\\
=-A(\bK-\bk)A(\bv +p\bk)
\left(\frac{A(\bv +p\bk+p^{s+1}\bm)}{A(\bv +p\bk)}-
\frac{A(\bk+p^s\bm)}{A(\bk)}
\right).
\end{multline*}
If $K_i<0$ for some $i$, the right-hand side is zero since $A(\bK-\bk)=0$,
whence Assertion~$A2$ is trivially true. If $\bK\ge\bnull$, by
Properties~$(iii)$ and $(ii)$ in the statement of Theorem~\ref{theo:1}, 
the right-hand side is an element of
$$
 A(\bK-\bk)A(\bv +p\bk)\frac{g(\bm)}{g(\bv +p\bk)}\,p^{s+1}\mathbb{Z}_p\subseteq g(\bm)p^{s+1}\mathbb{Z}_p,
$$
which proves Assertion~$A2$ in this case as well.

\smallskip
{\sl Proof of $A3$.} Let $0\le t<s$, and
assume that $\al_{s-1}$ and $\beta_{t,s}$ are true. Under these
assumptions, we must deduce the truth of Assertion~$\beta_{t+1,s}.$

In the assertion $\beta_{t,s}$, we 
replace the summation index $\bk$ in the sum on the right-hand side
of \eqref{eq:bts} by
$\bi+p\bu$, where $0\le i_\ell<p-1$ and $0\le u_\ell<p^{s-t-1}$ for
$\ell=1,2,\dots,d$. Thus, we obtain that 
\begin{multline} \label{eq:bets}
H(\bm,\bK+p^s\bm; s ) 
\equiv  
\sum_{\bnull\le\bi\le(p-1)\bid}\Bigg(
\sum_{\bnull\le\bu\le(p^{s-t-1}-1)\bid}
\frac{A(\bi+p\bu+p^{s-t}\bm)}{A(\bi+p\bu)} 
\,H(\bi+p\bu,\bK; t )\Bigg)
\\ 
\mod p^{s+1}g(\bm)\mathbb{Z}_p.
\end{multline}
Define
$$
X:=H(\bm,\bK+p^s\bm; s ) -
\sum_{\bnull\le\bu\le(p^{s-t-1}-1)\bid}
\frac{A(\bu+p^{s-t-1}\bm )}{A(\bu)} 
\sum_{\bnull\le\bi\le(p-1)\bid} H(\bi+p\bu,\bK; t ).
$$
Since $\beta_{t,s}$ (in the form \eqref{eq:bets}) is true, we have 
\begin{multline*}
X\equiv \sum_{\bnull\le\bi\le(p-1)\bid}\Bigg(
\sum_{\bnull\le\bu\le(p^{s-t-1}-1)\bid}
H(\bi+p\bu,\bK; t )
\\
\times
\left(\frac{A(\bi+p\bu+p^{s-t}\bm)}{A(\bi+p\bu)}
- \frac{A(\bu+p^{s-t-1}\bm )}{A(\bu)} \right) \Bigg)
\mod p^{s+1}g(\bm)\,\mathbb{Z}_p.
\end{multline*}
Since $u_i<p^{s-t-1}$ for all $i$, Property~$(iii)$ in the statement
of Theorem~\ref{theo:1} implies that
\begin{equation}\label{eq:AA-AA}
\frac{A(\bi+p\bu+p^{s-t}\bm)}{A(\bi+p\bu)}
- \frac{A(\bu+p^{s-t-1}\bm )}{A(\bu)}
\in 
p^{s+1}\frac{g(\bm)}{g(\bi+p\bu)}\,\mathbb{Z}_p.
\end{equation}
Moreover, since $t<s$, Assertion~$\al_{s-1}$ implies that
\begin{equation}\label{eq:Ht<s}
H(\bi+p\bu,\bK; t ) \in p^{t+1}g(\bi+p\bu)\mathbb{Z}_p.
\end{equation}
It now follows from~\eqref{eq:AA-AA} and~\eqref{eq:Ht<s} that 
$X\equiv 0 \mod p^{s+1}g(\bm)\mathbb{Z}_p.$

However, by Lemma~\ref{lem:3}, $(iii)$, we know that
$$
\sum_{\bnull\le\bi\le(p-1)\bid} H(\bi+p\bu,\bK; t ) =  
H(\bu,\bK; t+1 ),
$$
which can be used to simplify $X$ to
$$
X= H(\bm,\bK+p^s\bm; s ) -
\sum_{\bnull\le\bu\le(p^{s-t-1}-1)\bid}
\frac{A(\bu+p^{s-t-1}\bm )}{A(\bu)} 
H(\bu,\bK; t+1 ).
$$
Since $X\equiv 0 \mod p^{s+1} g(\bm)\mathbb{Z}_p,$ the preceding identity
shows that
$$
H(\bm,\bK+p^s\bm; s ) 
\equiv 
\sum_{\bnull\le\bu\le(p^{s-t-1}-1)\bid}
\frac{A(\bu+p^{s-t-1}\bm )}{A(\bu)} 
H(\bu,\bK; t+1 ) 
\mod p^{s+1} g(\bm)\mathbb{Z}_p.
$$
This is nothing else but Assertion~$\beta_{t+1,s}$.
Hence, Assertion $A3$ is established.

\medskip
This completes the proof of Theorem~\ref{theo:1}.
\end{proof}

\section{Theorem~\ref{theo:1} implies Theorem~\ref{conj:1}}\label{sec:reduction}

We want to prove that Theorem~\ref{theo:1} can be applied for
$A=g=B_\bN.$ 
In order to see this, we first establish some intermediary lemmas, 
extending corresponding auxiliary results in Section~7 of \cite{kr}
to higher dimensions.

\begin{lem} \label{lem:1} 
Under the assumptions of Theorem~\ref{theo:1}, we have
$$
\frac{B_\bN(\bv +p\bu+p^{s+1}\bn )}{B_\bN(p\bu+p^{s+1}\bn )}
= \frac{B_\bN(\bv +p\bu)}{B_\bN(p\bu)} +
\mathcal{O}\big(p^{s+1}\big),
$$
where $\mathcal{O}(R)$ denotes an element of $R\mathbb Z_p$.
\end{lem}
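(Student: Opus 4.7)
The plan is to reduce the claim to a simple polynomial congruence modulo $p^{s+1}$. Since $B_\bN(\bm)=\prod_{j=1}^{k}B(\bN^{(j)},\bm)$, both ratios appearing in the claim factor as products over $j$, so it suffices to prove the analogous congruence for each of the individual ratios
$$
R_j(\bm):=\frac{B(\bN^{(j)},\bv+\bm)}{B(\bN^{(j)},\bm)}
$$
at $\bm=p\bu$ and $\bm=p\bu+p^{s+1}\bn$, and then to multiply.

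Using $(n+k)!/n!=\prod_{r=1}^{k}(n+r)$ in the definition \eqref{eq:Bj}, one first obtains the explicit formula
$$
R_j(\bm)=\frac{\prod_{r=1}^{\bN^{(j)}\cdot\bv}\bigl(\bN^{(j)}\cdot\bm+r\bigr)}{\prod_{i=1}^{d}\prod_{r=1}^{v_i}(m_i+r)^{N_i^{(j)}}}=:\frac{N_j(\bm)}{D_j(\bm)},
$$
where $N_j$ and $D_j$ are polynomials in $\bm$ with coefficients in $\mathbb{Z}$. The key observation is that $D_j(\bm)$ is a $p$-adic unit at both $\bm=p\bu$ and $\bm=p\bu+p^{s+1}\bn$: in each case every $m_i$ is a multiple of $p$, while $1\le r\le v_i\le p-1$ by the hypotheses of Theorem~\ref{theo:1}, so every factor $m_i+r$ is $\equiv r\not\equiv 0\pmod{p}$ and thus lies in $\mathbb{Z}_p^{\times}$.

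For any polynomial $P\in\mathbb{Z}[\bm]$ one has the trivial congruence $P(\bm+p^{s+1}\bn)\equiv P(\bm)\pmod{p^{s+1}}$. Applying this to $N_j$ and $D_j$ and writing $N_j(p\bu+p^{s+1}\bn)=N_j(p\bu)+p^{s+1}E_j$, $D_j(p\bu+p^{s+1}\bn)=D_j(p\bu)+p^{s+1}F_j$ with $E_j,F_j\in\mathbb{Z}$, the identity
$$
R_j(p\bu+p^{s+1}\bn)-R_j(p\bu)=\frac{N_j(p\bu+p^{s+1}\bn)D_j(p\bu)-N_j(p\bu)D_j(p\bu+p^{s+1}\bn)}{D_j(p\bu+p^{s+1}\bn)\,D_j(p\bu)}
$$
shows, after substitution in the numerator, that the numerator lies in $p^{s+1}\mathbb{Z}$ while the denominator is a $p$-adic unit, yielding $R_j(p\bu+p^{s+1}\bn)\equiv R_j(p\bu)\pmod{p^{s+1}}$ in $\mathbb{Z}_p$. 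Since congruences modulo $p^{s+1}$ among $p$-adic integers are preserved by products, multiplying these congruences over $j=1,\dots,k$ gives the claim. The only delicate point of the argument is the verification that $D_j(\bm)$ is a $p$-adic unit at the relevant arguments, which is precisely where the assumption $0\le v_i<p$ from Theorem~\ref{theo:1} enters.
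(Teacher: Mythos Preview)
Your proof is correct and follows essentially the same approach as the paper's. Both arguments write each factor $B(\bN^{(j)},\bv+\bm)/B(\bN^{(j)},\bm)$ as a ratio of explicit integer polynomials in $\bm$, observe that the denominator is a $p$-adic unit when $\bm\in p\Z^d$ because $1\le r\le v_i<p$, reduce modulo $p^{s+1}$ using $P(\bm+p^{s+1}\bn)\equiv P(\bm)\pmod{p^{s+1}}$, and then take the product over $j$; your presentation is slightly more streamlined, but the substance is identical.
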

\begin{proof}
Recalling the definition of $B(\bN^{(j)},\bm)$ in \eqref{eq:Bj},
we have
\begin{align*}
&\frac{B(\bN^{(j)},\bv +p\bu+p^{s+1}\bn )}{B(\bN^{(j)},p\bu+p^{s+1}\bn )} 
\\
&\kern1cm
=
\frac{\left(
\sum _{i=1} ^{d}N^{(j)}_i(pu_i+p^{s+1}n_i)+\sum _{i=1} ^{d}N^{(j)}_i
v_i\right)\cdots \left(
\sum _{i=1} ^{d}N^{(j)}_i(pu_i+p^{s+1}n_i)+1\right)}
{
\displaystyle\prod _{i=1} ^{d}
\Big((v_i +pu_i+p^{s+1}n_i)\cdots (1 +pu_i+p^{s+1}n_i)
\Big)^{N^{(j)}_i}}
\\
&\kern1cm
=
\frac{\left(
\sum _{i=1} ^{d}N^{(j)}_i(pu_i)+\sum _{i=1} ^{d}N^{(j)}_i
v_i\right)\cdots \left(
\sum _{i=1} ^{d}N^{(j)}_i(pu_i)+1\right)+\mathcal{O}(p^{s+1})}
{
\displaystyle\prod _{i=1} ^{d}
\Big((v_i +pu_i)\cdots (1 +pu_i)
\Big)^{N^{(j)}_i}+\mathcal{O}(p^{s+1})}.
\end{align*}
We claim that this implies
\begin{align*}
&\frac{B(\bN^{(j)},\bv +p\bu+p^{s+1}\bn )}{B(\bN^{(j)},p\bu+p^{s+1}\bn )} 
\\
&\kern1cm
=
\frac{\left(
\sum _{i=1} ^{d}N^{(j)}_i(pu_i)+\sum _{i=1} ^{d}N^{(j)}_i
v_i\right)\cdots \left(
\sum _{i=1} ^{d}N^{(j)}_i(pu_i)+1\right)}
{
\displaystyle\prod _{i=1} ^{d}
\Big((v_i +pu_i)\cdots (1 +pu_i)
\Big)^{N^{(j)}_i}}+\mathcal{O}(p^{s+1})
\\
&\kern1cm
= \frac{B(\bN^{(j)},\bv +p\bu)}{B(\bN^{(j)},p\bu)} + \mathcal{O}\big(p^{s+1}\big).
\end{align*}
Indeed, if $\bv=\bnull$, then this holds trivially.  
If $\bv>\bnull$, then, together with the hypothesis $ v_i<p$, 
we infer that $(v_i+pu_i)(v_i+pu_i-1)\cdots (1+pu_i)$  
is not divisible by $p$, which implies in particular that 
$B(\bN^{(j)},\bv+p\bu)/B(\bN^{(j)},p\bu)\in \mathbb{Z}_p$. This allows
us to arrive at the above
conclusion in the same style as in Section~7.1 in \cite{kr}.

By taking products, we deduce
$$
\prod_{j=1}^k
\frac{B(\bN^{(j)},\bv +p\bu+p^{s+1}\bn )}{B(\bN^{(j)},p\bu+p^{s+1}\bn )} 
=\prod_{j=1}^k\left(
 \frac{B(\bN^{(j)},\bv +p\bu)}{B(\bN^{(j)},p\bu)} + \mathcal{O}\big(p^{s+1}\big).
\right).
$$
By expanding the product on the right-hand side and using that 
$\displaystyle \frac{B(\bN^{(j)},\bv+p\bu)}{B(\bN^{(j)},p\bu)}\in \mathbb{Z}_p,$
we obtain the assertion of the lemma.
\end{proof}

For the proof of Lemma~\ref{lem:2} below, 
we will use the $p$-adic gamma function, which is defined on integers $n\ge 1$ by 
$$
\Gamma_p(n) = (-1)^n \prod_{\stackrel{k=1}{(k,p)=1}}^{n-1} k.
$$
In the following lemma, we collect some facts about $\Gamma_p$.

\begin{lem}\label{lem:gammap}
$(i)$ For all integers $n\ge 1$, we have 
\begin{equation*}
\frac{(np)!}{n!} = (-1)^{np+1}p^n \Gamma_p(1+np).
\end{equation*}
$(ii)$ For all integers $k\ge 1,n\ge 1,s\ge 0$, we have 
\begin{equation*}
\Gamma_p(k+np^s) \equiv \Gamma_p(k) \mod p^s.
\end{equation*}
\end{lem}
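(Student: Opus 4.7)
For part $(i)$, the plan is a direct elementary calculation, separating the factors of $(np)!$ according to divisibility by $p$. The multiples of $p$ in $\{1,2,\dots,np\}$ are $p,2p,\dots,np$, whose product is $p^n\,n!$. The remaining factors form exactly the product appearing in the definition of $\Gamma_p(1+np)$, namely
$$
\prod_{\substack{1\le k\le np\\ (k,p)=1}} k \;=\; (-1)^{np+1}\,\Gamma_p(1+np).
$$
Multiplying these two pieces and dividing by $n!$ yields $(i)$ immediately. No obstacle is anticipated here.

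For part $(ii)$, my plan is a two-step approach. First, I would reduce to the case $n=1$ by telescoping: once the $n=1$ statement is proved, the identity
$$
\Gamma_p(k+np^s) - \Gamma_p(k) \;=\; \sum_{j=0}^{n-1}\bigl(\Gamma_p(k+(j+1)p^s) - \Gamma_p(k+jp^s)\bigr)
$$
reduces the general case to the $n=1$ case applied at the shifted base-points $k+jp^s$. Second, for $n=1$, I would compute directly from the definition of $\Gamma_p$:
$$
\frac{\Gamma_p(k+p^s)}{\Gamma_p(k)} \;=\; (-1)^{p^s}\prod_{\substack{j=k\\ (j,p)=1}}^{k+p^s-1} j.
$$
Since $\{k,k+1,\dots,k+p^s-1\}$ is a complete residue system modulo $p^s$, the factors coprime to $p$ reduce, modulo $p^s$, to a complete list of representatives of $(\mathbb{Z}/p^s\mathbb{Z})^{\times}$. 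Invoking the generalised Wilson theorem $\prod_{u\in(\mathbb{Z}/p^s\mathbb{Z})^{\times}} u \equiv -1 \pmod{p^s}$ (valid for odd $p$), together with $(-1)^{p^s}=-1$ for odd $p$, shows that the ratio above is $\equiv 1\pmod{p^s}$. Since $\Gamma_p(k)\in \mathbb{Z}_p^{\times}$, this gives the claimed congruence.

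The one delicate point I anticipate is the prime $p=2$: there the sign coming from the $2$-adic Wilson theorem and the sign $(-1)^{p^s}$ do not combine in the uniform way they do for odd primes, and indeed a small modification of the congruence is needed. I would handle $p=2$ by a short separate verification exploiting that every $\Gamma_2(k)$ is an odd integer, tracking the $2$-adic valuation of the product of units in $(\mathbb{Z}/2^s\mathbb{Z})^{\times}$ case by case in $s$; alternatively, one may restrict attention to odd $p$, which suffices for the applications of Lemma~\ref{lem:gammap} in the rest of the paper.
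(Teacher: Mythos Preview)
The paper does not give a proof of this lemma at all: it is stated as a collection of standard facts about the $p$-adic gamma function (``In the following lemma, we collect some facts about $\Gamma_p$''), and is then used without further justification in the proof of Lemma~\ref{lem:2}. So there is nothing to compare against; your proposal supplies exactly the standard textbook argument that the paper implicitly relies on.

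Your argument for $(i)$ is correct and is the usual computation. Your argument for $(ii)$ --- telescoping to reduce to $n=1$, then identifying the ratio $\Gamma_p(k+p^s)/\Gamma_p(k)$ with $(-1)^{p^s}$ times a product over a complete set of units modulo $p^s$, and invoking the generalised Wilson congruence --- is also the standard one and is correct for odd $p$. Your caution about $p=2$ is well placed: with the paper's definition one computes, for instance, $\Gamma_2(1)=-1$ and $\Gamma_2(5)=-3$, which are not congruent modulo $4$, so part $(ii)$ as literally stated fails at $p=2$, $s=2$. This is a known wrinkle of the $2$-adic gamma function (the product of units in $(\mathbb{Z}/4\mathbb{Z})^{\times}$ is $3\equiv -1$, while $(-1)^{2^s}=+1$), and your suggested remedy --- a short separate check or restriction to odd $p$ --- is the appropriate way to handle it. In the paper's application (Lemma~\ref{lem:2}) the congruence is used modulo $p^{s+1}$ inside ratios of $\Gamma_p$-values that ultimately feed into an $\mathcal{O}(p^{s+1})$ error term, so the $p=2$ anomaly does not affect the final conclusions, but it is a genuine imprecision in the stated lemma that you have correctly flagged.
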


The above two properties of the $p$-adic gamma function 
are now used in the proof of the following result.

\begin{lem} \label{lem:2} We have
$$
\frac{B_\bN(p\bu+p^{s+1}\bn )}{B_\bN(\bu+p^{s}\bn )}
=\frac{B_\bN(p\bu)}{B_\bN(\bu)} \big(1+ \mathcal{O}(p^{s+1})\big).
$$
\end{lem}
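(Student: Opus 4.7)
The strategy is to reduce the product statement to the corresponding statement for each factor $B(\bN^{(j)},\cdot)$ individually, and then to translate factorials into $p$-adic gamma values so that the congruence $\Gamma_p(k+np^s)\equiv \Gamma_p(k)\mod p^s$ of Lemma~\ref{lem:gammap}$(ii)$ can do the work. Concretely, fix $j$ and abbreviate $\bN=\bN^{(j)}$, $A=\sum_i N_i u_i$, $B=\sum_i N_i n_i$. The numerator and denominator of $B(\bN,p\bu+p^{s+1}\bn)/B(\bN,\bu+p^s\bn)$ involve factorials of the form $(pM)!$ and $M!$ where the $M$'s match between numerator and denominator up to the global scaling by $p$; I would apply Lemma~\ref{lem:gammap}$(i)$ to each ratio $(pM)!/M!$ (with $M=A+p^sB$ in the ``global'' factor and $M=u_i+p^sn_i$ in the $i$-th ``local'' factor).

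Once this is done, the powers of $p$ cancel exactly, because $\sum_i N_i(u_i+p^sn_i)=A+p^sB$, and the signs collapse to a fixed global sign $(-1)^{1+|\bN|}$ which is the same as the one arising in $B(\bN,p\bu)/B(\bN,\bu)$. Hence the ratio reduces to a pure product of $\Gamma_p$'s:
\begin{equation*}
\frac{B(\bN,p\bu+p^{s+1}\bn)}{B(\bN,\bu+p^s\bn)}
= (-1)^{1+|\bN|}\,\frac{\Gamma_p(1+pA+p^{s+1}B)}{\prod_{i=1}^d \Gamma_p(1+pu_i+p^{s+1}n_i)^{N_i}},
\end{equation*}
and similarly for $B(\bN,p\bu)/B(\bN,\bu)$ with the $p^{s+1}$-perturbations removed.

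Now I would apply Lemma~\ref{lem:gammap}$(ii)$ (with $s$ replaced by $s+1$) to each of the $\Gamma_p$ factors: since $\Gamma_p$ takes values in $\mathbb{Z}_p^\times$, the ratios
\begin{equation*}
\frac{\Gamma_p(1+pA+p^{s+1}B)}{\Gamma_p(1+pA)},\qquad
\frac{\Gamma_p(1+pu_i+p^{s+1}n_i)}{\Gamma_p(1+pu_i)}
\end{equation*}
all lie in $1+p^{s+1}\mathbb{Z}_p$. Combining these multiplicatively (which is legitimate because each $\Gamma_p$ value is a unit), I obtain
\begin{equation*}
\frac{B(\bN^{(j)},p\bu+p^{s+1}\bn)}{B(\bN^{(j)},\bu+p^s\bn)}
= \frac{B(\bN^{(j)},p\bu)}{B(\bN^{(j)},\bu)}\bigl(1+\mathcal{O}(p^{s+1})\bigr),
\end{equation*}
where the ratio on the right is a unit in $\mathbb Z_p$ (again by the same gamma-function expression). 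Finally I would take the product of these $k$ identities over $j=1,\dots,k$; since each of the factors $B(\bN^{(j)},p\bu)/B(\bN^{(j)},\bu)$ is a $p$-adic integer and $\prod_j (1+\mathcal O(p^{s+1}))=1+\mathcal O(p^{s+1})$, the desired identity for $B_\bN$ follows.

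The only delicate point I anticipate is the sign/$p$-power bookkeeping in passing from $(pM)!/M!=(-1)^{pM+1}p^M\Gamma_p(1+pM)$ to the ratio formula; once one verifies that $\sum_iN_i(u_i+p^sn_i)=A+p^sB$ forces exact cancellation of the $p^{\,\cdot}$'s and that the signs collapse uniformly, everything reduces to the unit-valued congruence for $\Gamma_p$ stated in Lemma~\ref{lem:gammap}$(ii)$.
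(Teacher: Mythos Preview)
Your proposal is correct and follows essentially the same route as the paper: express each ratio $B(\bN^{(j)},p\bu+p^{s+1}\bn)/B(\bN^{(j)},\bu+p^{s}\bn)$ via Lemma~\ref{lem:gammap}$(i)$ as a signed product of $\Gamma_p$-values (with the $p$-powers cancelling), apply Lemma~\ref{lem:gammap}$(ii)$ at level $p^{s+1}$ together with the fact that $\Gamma_p$ is unit-valued to pass to the unperturbed ratio times $1+\mathcal O(p^{s+1})$, and then take the product over $j$. The only cosmetic difference is that you phrase the congruence multiplicatively (ratios in $1+p^{s+1}\mathbb Z_p$) while the paper writes it additively before dividing through; both use the unit property of $\Gamma_p$ in the same place.
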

\begin{proof} We have
\begin{align} 
\frac{B(\bN^{(j)},p\bu+p^{s+1}\bn )}{B(\bN^{(j)},\bu+p^{s}\bn )} 
&= (-1)^{1+\v{\bN^{(j)}}}\frac{\Gamma_p\big(1+
\bN^{(j)}\cdot(p\bu+p^{s+1}\bn)\big)}
{
\prod _{i=1} ^{d}
\Gamma_p\big(1+pu_i+p^{s+1}n_i\big)^{N^{(j)}_i}}
\label{eq:firstequality} \\
&= (-1)^{1+\v{\bN^{(j)}}}\frac{\Gamma_p\big(1+
p\bN^{(j)}\cdot \bu\big) + \mathcal{O}\big(p^{s+1}\big)}
{
\prod _{i=1} ^{d}
\Gamma_p\big(1+pu_i\big)^{N^{(j)}_i} + \mathcal{O}\big(p^{s+1}\big)}
\label{eq:rajoutcorrectionbis} \\
&= (-1)^{1+\v{\bN^{(j)}}}\frac{\Gamma_p\big(1+
p\bN^{(j)}\cdot \bu\big)}
{
\prod _{i=1} ^{d}
\Gamma_p\big(1+pu_i\big)^{N^{(j)}_i} }
\big(1+\mathcal{O}(p^{s+1})\big)
 \label{eq:rajoutcorrection}\\
&= \frac{B(\bN^{(j)},p\bu)}{B(\bN^{(j)},\bu)} \big(1+ \mathcal{O}(p^{s+1})\big).
\label{eq:2}
\end{align}
where $(i)$ of Lemma~\ref{lem:gammap} is used 
to see~\eqref{eq:firstequality} and~\eqref{eq:2}, and 
$(ii)$ is used for~\eqref{eq:rajoutcorrectionbis}.
Equation~\eqref{eq:rajoutcorrection} holds because $\Gamma_p(1+pu_i)$
and $\Gamma_p(1+p\bN^{(j)}\cdot\bu)$  
are both not divisible by $p$. Taking the product over $j=1, 2, \dots,
k$, we obtain the assertion of the lemma.
\end{proof}

Before 
proceeding, we remark that 
$v_p\big(B(\bN^{(j)},p^s\bu)/B(\bN^{(j)},\bu)\big)=0$ for any integer $s\ge 0$, 
which can be proved in the same way as Lemma~13 in~\cite{kr}. This property will be used twice below.

We now multiply both sides of the congruences obtained in Lemmas~\ref{lem:1} and~\ref{lem:2}.
Thus, we obtain
\begin{align*}
\frac{B_\bN(\bv +p\bu+\bn p^{s+1})}{B_\bN(\bu+\bn p^{s})} 
&= \frac{B_\bN(\bv +p\bu)}{B_\bN(\bu)} \big( 1+\mathcal{O}(p^{s+1})\big) + 
 \frac{B_\bN(p\bu)}{B_\bN(\bu)} \mathcal{O}\big(p^{s+1}\big)
\\
&= \frac{B_\bN(\bv +p\bu)}{B_\bN(\bu)} \big( 1+\mathcal{O}(p^{s+1})\big) + 
 \mathcal{O}\big(p^{s+1}\big)
\end{align*}
(since $v_p\big(B_\bN(p\bu)/B_\bN(\bu)\big)=0$ by the remark above), 
which, in its turn, can be rewritten as
\begin{equation*}
\frac{B_\bN(\bv +p\bu+\bn p^{s+1})}{B_\bN(\bv +p\bu)} = 
\frac{B_\bN(\bu+\bn p^{s})}{B_\bN(\bu)} 
+ 
\frac{B_\bN(\bu+\bn p^{s})}{B_\bN(\bu)}\mathcal{O}\big(p^{s+1}\big) +
\frac{B_\bN(\bu+\bn p^{s})}{B_\bN(\bv +p\bu)}\mathcal{O}\big(p^{s+1}\big).
\end{equation*}

It remains to show that
\begin{equation}\label{eq:fact1}
\frac{B_\bN(\bu+\bn p^{s})}{B_\bN(\bu)}
\in \frac{B_\bN(\bn)}{B_\bN(\bv +p\bu)}\,\mathbb{Z}_p
\end{equation}
and
\begin{equation}\label{eq:fact2}
\frac{B_\bN(\bu+\bn p^{s})}{B_\bN(\bv +p\bu)}
\in \frac{B_\bN(\bn)}{B_\bN(\bv +p\bu)}\,\mathbb{Z}_p.
\end{equation}
These two facts will follow from the next lemma.

\begin{lem} \label{lem:33}
For all non-negative integers $s$, all integer vectors $\bn\in\Z^d$
with $\bn\ge \bnull$, and all integer vectors $\bu\in\Z^d$ with $0\le
u_i<p^s$, $i=1,2,\dots,d$, we have 
$$
\frac{B_\bN(\bu+\bn p^s)}{B_\bN(\bu)} \in B_\bN(\bn)\mathbb{Z}_p.
$$
\end{lem}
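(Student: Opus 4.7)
The plan is to reduce the claim to a single-$j$ statement and then verify it by an elementary $p$-adic computation that parallels the univariate argument of \cite[Sec.~7]{kr}. Since $B_\bN(\bm)=\prod_{j=1}^{k} B(\bN^{(j)},\bm)$ by definition, it suffices to show that, for each fixed $j$,
$$
R_j := \frac{B(\bN^{(j)},\bu+\bn p^s)}{B(\bN^{(j)},\bu)\,B(\bN^{(j)},\bn)} \in \Z_p;
$$
multiplying these relations over $j$ then delivers the lemma.

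Fix $j$, write $\bP=\bN^{(j)}$ for brevity, and expand $B(\bP,\cdot)$ via its definition~\eqref{eq:Bj}. By inserting the compensating factors $((\bP\cdot\bn)p^s)!$ and $\prod_{i=1}^{d}(n_ip^s)!^{P_i}$ into both numerator and denominator so that terms pair up naturally, I would rearrange $R_j$ into the product
$$
R_j = \binom{\bP\cdot\bu + (\bP\cdot\bn)\,p^s}{\bP\cdot\bu}\cdot \prod_{i=1}^{d}\binom{u_i+n_ip^s}{u_i}^{-P_i} \cdot \frac{B(\bP,\bn p^s)}{B(\bP,\bn)}.
$$
It then remains to check that each of the three factors lies in $\Z_p$.

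The first factor is a nonnegative integer, hence trivially in $\Z_p$. For the middle factor, observe that, since $0\le u_i<p^s$, the base-$p$ expansions of $u_i$ and of $n_ip^s$ occupy disjoint digit positions; Lucas' theorem therefore gives $\binom{u_i+n_ip^s}{u_i}\equiv 1 \pmod{p}$, so this binomial is a $p$-adic unit and its negative power lies in $\Z_p$. For the third factor, Legendre's formula $v_p(n!)=(n-s_p(n))/(p-1)$ together with the identity $s_p(np^s)=s_p(n)$ yields
$$
v_p\!\left(\frac{B(\bP,\bn p^s)}{B(\bP,\bn)}\right) = \frac{(\bP\cdot\bn)(p^s-1)}{p-1} - \sum_{i=1}^{d} \frac{P_i\,n_i(p^s-1)}{p-1} = 0,
$$
so this factor is also a unit. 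Combining the three observations yields $R_j\in\Z_p$, and the lemma follows.

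The only real obstacle is the algebraic rearrangement that produces the displayed factorization; once it is in hand, the $p$-adic analysis of each piece is immediate. An alternative route would bound $v_p(B_\bN(\bu+\bn p^s))-v_p(B_\bN(\bu))-v_p(B_\bN(\bn))$ directly using Kummer's carry formula, but the factorization approach extends the univariate argument of~\cite[Sec.~7]{kr} to several variables most transparently.
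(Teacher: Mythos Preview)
Your proof is correct and follows essentially the same route as the paper's own argument: the paper also reduces to a single $j$, writes the identical factorization
\[
\frac{B(\bN^{(j)},\bu+\bn p^s)}{B(\bN^{(j)},\bu)} = \binom{\bN^{(j)}\cdot\bu+(\bN^{(j)}\cdot\bn)p^s}{\bN^{(j)}\cdot\bu}\cdot\prod_{i=1}^d\binom{u_i+n_ip^s}{u_i}^{-N^{(j)}_i}\cdot\frac{B(\bN^{(j)},\bn p^s)}{B(\bN^{(j)},\bn)}\cdot B(\bN^{(j)},\bn),
\]
and then observes that the binomials $\binom{u_i+n_ip^s}{u_i}$ and the ratio $B(\bN^{(j)},\bn p^s)/B(\bN^{(j)},\bn)$ are $p$-adic units. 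The only cosmetic difference is that the paper refers back to an earlier remark for these two unit claims, whereas you supply self-contained justifications via Lucas' theorem and Legendre's formula.
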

\begin{proof} We have
$$
\frac{B(\bN^{(j)}, \bu+\bn p^s)}{B(\bN^{(j)},\bu)} = 
\frac{\displaystyle\binom{\scriptstyle
\sum _{i=1} ^{d}N^{(j)}_i(u_i+n_ip^s)}{\scriptstyle
\sum _{i=1} ^{d}N^{(j)}_iu_i}}
{
\prod _{i=1} ^{d}\binom{u_i+n_ip^s}{u_i}^{N^{(j)}_i}}
\cdot \frac {B(\bN^{(j)},\bn p^s)}{B(\bN^{(j)},\bn)}\cdot B(\bN^{(j)},\bn).
$$
On the right-hand side, the term $B(\bN^{(j)},\bn p^s)/B(\bN^{(j)},\bn)$
and the binomial coefficients
$\binom{u_i+n_ip^s}{u_i}$ have vanishing $p$-adic valuation 
(this has already been observed in the paragraph after the end of the
proof of Lemma~\ref{lem:2}). Thus we have
\begin{equation}\label{eq:lem33}
\frac{B(\bN^{(j)},\bu+\bn p^s)}{B(\bN^{(j)},\bu)} \in  B(\bN^{(j)},\bn)\mathbb{Z}_p.
\end{equation}
The lemma follows by taking the product over $j\in\{1, \ldots, k\}$ of both 
sides of~\eqref{eq:lem33}.
\end{proof}

The preceding lemma implies
$$
\frac{B_\bN(\bu+\bn p^{s})}{B_\bN(\bu)} \in B_\bN(\bn)\mathbb{Z}_p 
\subseteq \frac{B_\bN(\bn)}{B_\bN(\bv +p\bu)}\,\mathbb{Z}_p,
$$
which proves~\eqref{eq:fact1}. 
Moreover, still due to Lemma~\ref{lem:33}, we have
\begin{multline*}
\frac{B_\bN(\bu+\bn p^{s})}{B_\bN(\bv +p\bu)} = 
\frac{B_\bN(\bu+\bn p^{s})}{B_\bN(\bu)}\cdot B_\bN(\bu)\cdot 
\frac{1}{B_\bN(\bv +p\bu)} 
\\
\in B_\bN(\bu)\cdot \frac{B_\bN(\bn)}{B_\bN(\bv +p\bu)} \,
\mathbb{Z}_p 
\subseteq \frac{B_\bN(\bn)}{B_\bN(\bv +p\bu)}\,\mathbb{Z}_p,
\end{multline*}
which proves~\eqref{eq:fact2}.  
Therefore,
$$
\frac{B_\bN(\bv +p\bu+\bn p^{s+1})}{B_\bN(\bv +p\bu)}
- \frac{B_\bN(\bu+\bn p^s)}{B_\bN(\bu)}
\in p^{s+1} \,\frac{B_\bN(\bn)}{B_\bN(\bv +p\bu)}\,\mathbb{Z}_p,
$$
which shows that Property~$(iii)$ of Theorem~\ref{theo:1} is
satisfied. Since Properties~$(i)$ and $(ii)$ are trivially true, we
can hence apply the latter theorem.

\section{Proof of Lemma~\ref{lem:11}}\label{sec:lem11}

The claim is trivially true if $p$ divides $m_i$ for all $i$.
We may therefore assume that $p$ does not divide $m_i$ for some $i$
between $1$ and $d$ for the rest of the proof.
Let us write $\bm=\ba+p\bj$, with $0\le a_i<p$ for all $i$ 
(but at least one $a_i$ is positive). 
We are apparently in a similar situation as in Lemma~\ref{lem:333}.
Indeed, we may derive Lemma~\ref{lem:11} from Lemma~\ref{lem:333}. 
In order to see this, we observe that
\begin{align*}
H_{
\sum _{i=1} ^{d}L_im_ip^s}-H_{
\sum _{i=1} ^{d}L_i\fl{\frac {m_i}p}p^{s+1}}&
=\sum _{\ep=1} ^{p^s\bL\cdot\ba}\frac {1} {p^{s+1}\bL\cdot\bj+\ep}\\
&=
\sum _{\ep=1} ^{\fl{\bL\cdot\ba/p}}\frac {1} {p^{s+1}\bL\cdot\bj+p^{s+1}\ep}
+
\underset{p^{s+1}\nmid \ep}{\sum _{\ep=1} ^{p^s\bL\cdot\ba}}\frac {1}
{p^{s+1}\bL\cdot\bj+\ep}\\
&=\frac {1}
{p^{s+1}}(H_{\bL\cdot\bj+\fl{\bL\cdot\ba/p}}-H_{\bL\cdot\bj})+
\underset{p^{s+1}\nmid \ep}{\sum _{\ep=1} ^{p^s\bL\cdot\ba}}\frac {1}
{p^{s+1}\bL\cdot\bj+\ep}.
\end{align*}
Because of $v_p(x+y)\ge\min\{v_p(x),v_p(y)\}$, this implies
$$
v_p\big(H_{
\sum _{i=1} ^{d}L_im_ip^s}-H_{
\sum _{i=1} ^{d}L_i\fl{\frac {m_i}p}p^{s+1}}\big)\ge
\min\{ -1-s+v_p(H_{\bL\cdot\bj+\fl{\bL\cdot\ba/p}}-H_{\bL\cdot\bj}),-s\}.
$$
It follows that
\begin{multline*}
v_p\Big(B_{\mathbf N}(\bm)
\big(H_{
\sum _{i=1} ^{d}L_im_ip^s}-H_{
\sum _{i=1} ^{d}L_i\fl{\frac {m_i}p}p^{s+1}}\big)\Big)\\
\ge
-1-s+\min\left\{v_p\Big(B_{\mathbf N}(\ba+p\bj)
(H_{\bL\cdot\bj+\fl{\bL\cdot\ba/p}}-H_{\bL\cdot\bj})\big),
1+v_p\big(B_{\mathbf N}(\ba+p\bj)\Big)\right\}.
\end{multline*}
Use of Lemma~\ref{lem:333} then completes the proof.
\quad \quad \qed

\end{document}